\documentclass[]{elsarticle}

\usepackage{lineno,hyperref}

\usepackage{graphicx}
\usepackage{bm}
\usepackage{multirow}
\usepackage{amsmath}
\usepackage{amsfonts}
\usepackage{amssymb}
\usepackage{amsthm}
\usepackage{secdot}
\usepackage{subcaption}
\usepackage{lscape}
\usepackage{xcolor}
\usepackage{enumitem}

\journal{Comput.~Methods Appl.~Mech.~Engrg.}

\theoremstyle{plain}
\newtheorem{theorem}{Theorem}[section]
\newtheorem{lemma}[theorem]{Lemma}

\theoremstyle{definition}
\newtheorem{definition}[theorem]{Definition}

\theoremstyle{remark}
\newtheorem{remark}{Remark}

\theoremstyle{assumption}
\newtheorem{assumption}[theorem]{Assumption}

\numberwithin{equation}{section}
\numberwithin{theorem}{section}
\numberwithin{remark}{section}
%\numberwithin{assumption}{section}

\newcommand{\llbracket}{\left[\!\left[}
\newcommand{\rrbracket}{\right] \! \right]}

\bibliographystyle{elsarticle-num}

\begin{document}

\begin{frontmatter}

\title{Spline-Based Solution Transfer for Space-Time Methods in 2D+$t$}

%% Group authors per affiliation:
\author{Logan Larose\corref{mycorrespondingauthor}}

\cortext[mycorrespondingauthor]{Corresponding author}
\ead{lfl5340@psu.edu}

\author{Jude T. Anderson }
\author{David M. Williams} 

\address{Department of Mechanical Engineering, The Pennsylvania State University, University Park, Pennsylvania 16802}

\address{Laboratories for Computational Physics and Fluid Dynamics, Naval Research Laboratory, Washington, DC 20375}

\fntext[fn1]{Distribution Statement A: Approved for public release. Distribution is unlimited.}

\begin{abstract}
This work introduces a new solution-transfer process for slab-based space-time finite element methods. The new transfer process is based on Hsieh-Clough-Tocher (HCT) splines and satisfies the following requirements: (i) it maintains high-order accuracy up to 4th order, (ii) it preserves a discrete maximum principle, (iii) it asymptotically enforces mass conservation, and (iv) it constructs a smooth, continuous surrogate solution in between space-time slabs. While many existing transfer methods meet the first three requirements, the fourth requirement is crucial for enabling visualization and boundary condition enforcement for space-time applications. In this paper, we derive an error bound for our HCT spline-based transfer process. Additionally, we conduct numerical experiments quantifying the conservative nature and order of accuracy of the transfer process. Lastly, we present a qualitative evaluation of the visualization properties of the smooth surrogate solution. 
\end{abstract}

\begin{keyword}
Solution transfer \sep Space-time \sep Finite element methods \sep Hsieh-Clough-Tocher splines \sep High order \sep Visualization
\end{keyword}

\end{frontmatter}

%%%%%%%%%%%%%%%%%%%%
% INTRODUCTION
%%%%%%%%%%%%%%%%%%%%

\section{Introduction}
\label{sec;introduction}

Solution transfer is a well-established aspect of scientific computing, see~\cite{farrell2009conservative,farrell2011conservative} and the references therein. The demand for high-fidelity solution transfer is increasingly pressing as automatic anisotropic mesh refinement continues to facilitate greater computational efficiency~\cite{marcum2014aligned} and produce more accurate results than post-processing by enrichment or moving points~\cite{george1991creation}. Solution transfer allows a simulation to be resumed on a new, adapted mesh---called the \textit{target mesh}---by transferring data from its most recent state on a pre-existing mesh---called the \textit{source mesh}. For time-dependent problems, mitigating the accumulation of error introduced by transfer/interpolation is essential for maintaining conservation. This is true for the classical method-of-lines approaches \emph{and} the more recent slab-based, space-time approaches. The latter space-time approaches are the main motivation for this paper, although, the work presented here is not limited to this context. In what follows, we explain our motivation in more detail.

\subsection{Motivation}

Space-time methods are inherently expensive, as they require an extra dimension, relative to traditional method-of-lines approaches. In order to address this issue, researchers such as Hulbert, Hughes, Mallet, Shakib and coworkers~\cite{hulbert1990space,hughes1987new,shakib1991new,hughes1996space,shakib1991new2} divided the space-time domain into space-time slabs. As a result, space-time problems can be solved on smaller subdomains (time slabs) in a sequential fashion. This approach to space-time problems is fairly mature, and the interested reader is invited to consult~\cite{anderson2023surface} for an overview of recent work in this area. The standard slab-based space-time approach does not require a sophisticated solution-transfer strategy, as the meshes on adjacent space-time slabs are conforming at the interface between the slabs. In this case, the solution-transfer process is trivial. However, due to recent advances in space-time mesh adaptation~\cite{caplan2019four,caplan2020four,nishikawa2020adaptive,caplan2022parallel,padway2022adaptive,belda2023conformal,gehring2023constant,diening2023adaptive,nishikawa2023time}, there is strong interest in creating solution-adapted meshes on space-time slabs. In this case, the meshes on consecutive space-time slabs are not guaranteed to be conforming. For these adapted, slab-based, space-time methods, at least two different types of solution transfer are required:
\begin{itemize}
    \item Solution transfer between adapted space-time meshes on a single space-time slab. These transfers are between pairs of $n$D+$t$ meshes.  
    \item Solution transfer between the terminating surface of a space-time mesh on a given slab, and the initial surface of a space-time mesh on the next slab. These transfers are between pairs of $n$D meshes.  
\end{itemize}
In this article, we are primarily interested in solution transfer of the second type, for cases in which $n=2$. It may seem natural to apply an existing solution-transfer approach to our space-time problems. However, solution-transfer methods for space-time slabs have unusual requirements: namely, a) they should support \emph{solution visualization}, and b) they should enable the \emph{enforcement of space-time boundary conditions}. 

In regards to visualization, one is often interested in viewing the solution on the initial or terminating plane of a space-time slab. However, if the solution data is discontinuous, this solution and its derivatives may not be smooth, and may be difficult to interpret. Therefore, we desire a strategy for creating a smooth solution (and derivatives) at the interfaces between space-time slabs. This smooth solution can be constructed as a part of the solution-transfer process. We note that there are more complicated approaches for visualizing the solution at an arbitrary point in time. In this case, one must extract data from the simulation using an \emph{intersection} approach. This strategy has been attempted by multiple authors, including Caplan~\cite{caplan2019four} and Belda Ferr{\'\i}n et al.~\cite{belda2020visualization}. However, this level of flexibility for visualization is beyond the scope of the current work.  

In regards to boundary conditions, one must enforce a suitable condition on the solution at the initial plane (hyperplane) of each space-time slab. This space-time boundary condition is taken from the solution on the terminating surface of the previous space-time slab. The process of enforcing this boundary condition can be complex, especially if the solution data on the terminating surface of the previous space-time slab is discontinuous. This is not immediately obvious, but can be illustrated with a simple example. Suppose that the surface meshes at the interfaces between adjacent time slabs are not conforming; in this case, the discontinuities which appear along the edges or faces of the elements in one surface mesh will overlap with the interiors of elements on the adjacent surface mesh. This may negatively impact the anisotropic adaptive-meshing process on the next space-time slab. In particular, there is a strong possibility that the meshing process will identify the discontinuities in the initial data, and then adapt the mesh to these (likely spurious) features. Therefore, smooth initial data is desirable for boundary condition enforcement in adaptive, slab-based, space-time methods.  

The issue described above, is already known to arise for three-dimensional \emph{steady} problems with spatial boundaries. In this context, the boundaries of the domain are composed of CAD surfaces, often called BREPs (boundary representations). Unfortunately, BREPs frequently contain degeneracies and singularities, as well as artifacts which arise from trimming~\cite{gammon2018review}. An anisotropic adaptive-meshing process will often identify these unimportant geometric features, and then attempt to resolve them~\cite{park2019geometry,park2021boundary}. In order to address this issue, the creators of the mesh adaptation software EPIC (Edge Primitive Insertion and Collapse~\cite{michal2012anisotropic}) developed a quadratic geometry surrogate based on the approach of~\cite{nagata2005simple}. In addition, the creators of  FEFLO.A~\cite{loseille2013cavity,loseille2017unique} created a cubic geometry surrogate~\cite{loseille2023p3}. Both approaches replace the BREP definitions with a smoother, surrogate geometry for spatial boundary condition enforcement. Inspired by this work, it is natural to construct a smooth, surrogate solution for enforcing space-time boundary conditions. 

In summary, solution-transfer methods for space-time applications have additional requirements, which are generally not satisfied by existing solution-transfer methods. In particular, we contend that the space-time solution-transfer process between slabs should create a smooth surrogate solution which facilitates boundary condition enforcement and visualization.

For the sake of completeness, in what follows, we will briefly review state-of-the-art techniques for solution transfer, despite their lack of immediate applicability to space-time methods. 

\subsection{Literature Review}

Linear interpolation is perhaps the most well-known solution-transfer approach. Unfortunately, standard linear interpolation procedures fail to preserve the conservative properties of a solution and introduce significant error~\cite{alauzet20073d,alauzet2010p1}. The shortcomings of linear interpolation have led to several developments in conservative interpolation in order to meet the rising demand for accurate and conservative transfer.

The technique of $L_2$-\emph{projection} or \emph{Galerkin projection} is a natural way to address the issues of conservation and accuracy which arise for linear interpolation. Some of the earliest work on $L_2$-projection appears to be that of George and Borouchaki~\cite{georgedelaunay}, Geuzaine et al.~\cite{geuzaine1999galerkin}, and Parent et al.~\cite{parent2008using}. For this approach, one requires that the integration of the solution (and its moments) over the source mesh is equivalent to the integration of the transferred solution (and its moments) over the target mesh. 
%Usually, the `moments' are products of the solution with the test functions on the target mesh. 
This procedure is straightforward, and always yields the desired results when the numerical integration, i.e.~the quadrature, is exact. Of course, if the meshes are not perfectly aligned, the quadrature will not be exact, as quadrature rules are only exact for polynomials. Unfortunately, the functions being integrated are usually piecewise polynomials with discontinuities, and thus conservation errors are introduced. The most popular way to address this issue is via mesh intersections: namely, one computes the intersection of the source and target meshes, and thereafter, creates meshes on the intersected regions. Exact integration is then performed over the resulting submeshes. It appears that George and Borouchaki first proposed this idea~\cite{georgedelaunay}. However, the details of their algorithm have not been formally published. Fortunately, there are several popular variants of this approach, which we will review below.

Farrell et al.~\cite{farrell2009conservative,farrell2011conservative} introduces the concept of a `supermesh' which they perform their integration over, as opposed to the standard approach of integrating over the target mesh, (see for example, Geuzaine et al.~\cite{geuzaine1999galerkin}). Farrell et al.~define a supermesh as follows: given two arbitrary unstructured volume meshes $\mathcal{T}_{s}$ and $\mathcal{T}_{t}$ (source and target meshes) of the domain $\Omega$, a supermesh is a mesh over $\Omega$ which, (1) contains all of the nodes from the parent meshes ($\mathcal{T}_{s}, \mathcal{T}_{t}$), and (2) the intersection of an element of the supermesh with any element of the parent meshes is either an empty set (no overlap) or is the whole element (completely overlaps). The transfer method proposed by Farrell et al.~first computes the supermesh of the target and source meshes. Additionally, a mapping of elements from the supermesh to the source mesh, and a mapping of elements from the target mesh to the source mesh is produced. Next, for each element on the target mesh, the integral over the element is taken as the sum of the integral values of the elements on the supermesh which it contains. Finally, the transferred solution is computed via $L_2$-projection. During the final step of the method, the mass matrix of the linear system introduced by the projection is replaced by a lumped mass matrix. The replacement does not affect conservation, but does bound the resulting solution and adds artificial diffusion. Additional care is required to ensure that the impact of the diffusivity on the solution is negligible. For further information on this strategy, we refer the interested reader to~\cite{farrell2009conservative,farrell2011conservative} and the references therein.

Alauzet and Mehrenberger~\cite{alauzet2010p1,alauzet2016parallel} developed a `matrix-free' interpolation scheme which avoids the construction of a global supermesh or linear systems. Instead, their method iterates over each element in the target mesh one-at-a-time, and computes its intersections with elements in the source mesh. This process returns a set of polygonal or polyhedral subdomains. Each of these subdomains is then meshed with triangular or tetrahedral elements, respectively. By construction, each triangular or tetrahedral element of the subdomains is guaranteed to be completely contained within an element on the source mesh. Exact quadrature is employed in order to construct a Taylor-series polynomial on each element of the target mesh. In a natural fashion, this polynomial can be evaluated at any nodes (or vertices) which belong to the element. This method is guaranteed to maintain second order accuracy (i.e.~$\mathcal{P}_1$ exactness). In addition, a discrete maximum principle can be enforced by a limiting procedure which corrects the reconstructed solution. 

In addition, we note that Alauzet and Mehrenberger~\cite{alauzet2010p1,alauzet2016parallel} identified a convenient list of three requirements for the solution-transfer process:
\begin{enumerate}[label=(\roman*)]
    \item Second-order accuracy, i.e.~$\mathcal{P}_1$ exactness.
    \item Discrete maximum principle.
    \item Mass conservation.
\end{enumerate}
Their proposed method satisfies all three of these criteria, and performs excellently for its intended applications. However, for our space-time applications, we have created an expanded list of requirements:
\begin{enumerate}[label=(\roman*)]
    \item High-order accuracy, i.e.~$\mathcal{P}_k$ exactness for $k \leq 3$.
    \item Discrete maximum principle.
    \item Mass conservation.
    \item Smooth, continuous surrogate solution.
\end{enumerate}
The smooth, continuous surrogate solution is required for the enforcement of boundary conditions, and provides a convenient means of visualization, as we discussed previously. 

%Thus, the matrix-free method would seem to be a good fit for this application, however the eventual goal for this method is to scale to $3D+t$. The complicated nature of calculating mesh-intersections and meshing the overlap region for $3D+t$ is what lead us to develop a method which can scale to higher dimensions, obeys the properties specified by Alauzet and Mehrenberger, and offers an added benefit of smooth solution reconstruction for visualization.

%In terms of visualization properties, (cubic) splines have been used to accurately approximate data and produce smooth images of geological faults from discrete data with large gradients~\cite{gout2008approximation}. Bernstein-Bezier splines (B-splines) can also efficiently and accurately fit large data sets, exploiting the high quality of the spline approximation, with a visually smooth appearance~\cite{rossl2004spline}. B-splines have been used for image compression and medical image processing (see~\cite{li2024fully} and references therein). These powerful approximation properties have inspired the development of entire smooth spaces of finite elements which use splines and high-order polynomials as basis functions, like Argyris, Bell, Hsieh-Clough-Tocher (HCT), or Powell-Sabin triangle. For our method, we chose to follow the work of Brenner and use HCT triangles for interpolation and production of a smooth intermediate solution to meet the fourth property of our solution transfer method.  

% Explain our method in a broad overview
% List new contributions

\subsection{An Alternative Approach}

In this work, we introduce a new solution-transfer method based on Hsieh-Clough-Tocher (HCT) splines. Broadly speaking, our method takes the original solution on the source mesh and constructs a smooth, surrogate solution by using averaging procedures and HCT interpolation techniques. Next, the smooth surrogate solution is transferred to the target mesh using a $L_2$-projection procedure. In order to ensure that the transferred solution remains well-behaved, we (optionally) enforce the discrete maximum principle on the transferred solution using the limiting procedure of Alauzet and Mehrenberger~\cite{alauzet2010p1,alauzet2016parallel}. Our approach is summarized below:

\begin{enumerate}
    \item Synchronize---i.e. carefully average---the solution and its first derivatives on the source mesh.
    \item Use HCT elements to interpolate the synchronized solution on the source mesh.
    \item Use $L_2$-projection to transfer the interpolated solution from the source mesh to the target mesh.
\end{enumerate}
In the process above, limiting can be enforced in step 3 as necessary. 

It is important to note that step 3 of our proposed method does not use the same $L_2$-projection strategy as Farrell et al.~\cite{farrell2009conservative,farrell2011conservative} or Alauzet and Mehrenberger~\cite{alauzet2010p1,alauzet2016parallel}. Instead of using mesh intersections to address conservation concerns, we use an element-subdivision strategy in which an element on the target mesh is subdivided into smaller elements, and quadrature rules are applied to these subelements. Our approach is the first step towards replacing the standard mesh intersection approach with an \emph{adaptive-quadrature} approach. From our perspective, adaptive quadrature is a more promising approach for addressing the conservation problem associated with solution transfer, especially as we consider the complexity of computing mesh intersections and meshing subdomains in higher-dimensional space, (i.e.~4D for most space-time applications). 

%It should be emphasized that while our method is motivated by space-time applications, we believe that it is well-suited for mesh-to-mesh interpolation in adaptive meshing, and related contexts.  

\subsection{Overview of the Paper}

This article begins by providing a detailed description of our HCT-based solution-transfer method in Section~\ref{sec;Implementation}. Next, Section~\ref{sec;theory} presents theoretical results which characterize the accuracy of the solution-transfer method. Thereafter, Section~\ref{sec;Results} presents the results of numerical experiments which assess the mass conservation and order of accuracy of the method. In addition, we demonstrate the effectiveness of our HCT-based method for visualization purposes. Section~\ref{sec;Conclusion} presents our conclusions and summarizes the findings of this work. Lastly, supplemental tables can be found in the Appendix.

\section{Implementation Details}
\label{sec;Implementation}

\subsection{Preliminaries}

The purpose of our solution-transfer method is to pass a solution from the terminating surface of a space-time slab to the initial surface of the next space-time slab. In order to fix ideas, let us introduce some notation. We refer to the original space-time slab as $Q_n$. This slab spans the interval from $[t_{n-1},t_{n}]$, and has a terminating surface mesh located at time $t_n^{-}$. This terminating surface mesh is the source mesh, and we denote it by mesh $a$ or $\mathcal{T}_a$. The next space-time slab is denoted by $Q_{n+1}$ and spans the interval from $[t_{n},t_{n+1}]$. It has an initial surface mesh located at time $t_{n}^{+}$. This initial surface mesh is the target mesh, and we denote it by mesh $b$ or $\mathcal{T}_b$. With this notation in mind, our objective is to transfer the solution $v_{h_a}$ from $\mathcal{T}_a$ located at $t_{n}^{-}$ to $\mathcal{T}_b$ located at $t_{n}^{+}$.

As we discussed earlier, the transfer process starts with an averaging procedure, in conjunction with HCT interpolation. This operation is denoted by $E(\cdot )$. The averaging and interpolation process forms a smooth, surrogate solution at time level $t_{n}$. This solution is denoted by $E(v_{h_a})$. Next, we perform an $L_2$-projection from this surrogate solution on to mesh $b$. This operation is denoted by $I_{h_b}(\cdot )$. The entire solution-transfer process, starting with $v_{h_a}$ and ending with $I_{h_b}(E(v_{h_a}))$ is illustrated in Figure~\ref{fig:slab_trans}.
\begin{figure}[h!]
    \centering
    \includegraphics[width=0.6\linewidth]{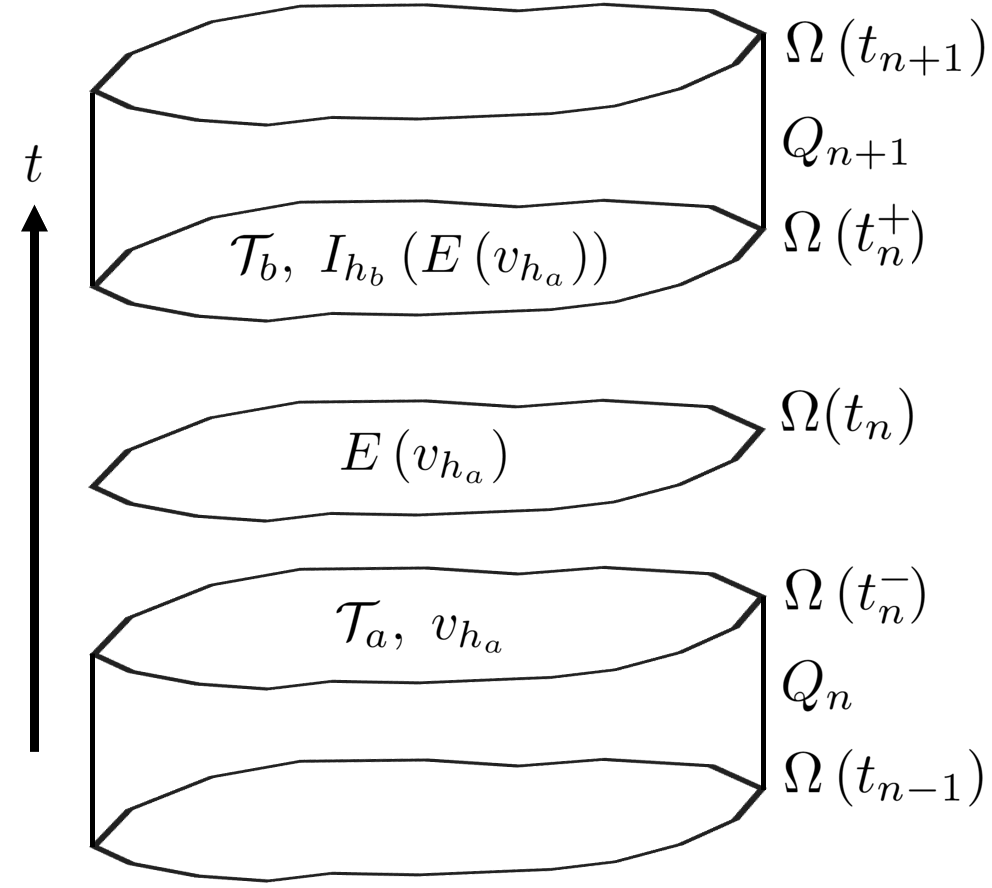}
    \caption{Transferring the solution $v_{h_a}$ on the terminating surface mesh $\mathcal{T}_a$ of the space-time slab $Q_n$ to the initial surface mesh $\mathcal{T}_b$ of the subsequent space-time slab $Q_{n+1}$. The smoothed solution is obtained using the smoothing operator $E(\cdot)$ on the solution $v_{h_a}$. This solution $E(v_{h_a})$ provides a surrogate solution at $t_n$ that is suitable for visualization and boundary condition enforcement. Projection operator $I_{h_b}(\cdot)$ transfers the smoothed solution to the initial grid of $Q_{n+1}$.}
    \label{fig:slab_trans}
\end{figure}

These steps will be discussed in greater detail in what follows. Additionally, throughout our discussion of the implementation, we assume that the solution degrees of freedom are laid out as shown in Figure~\ref{fig:P1_P2_dof}, for polynomial degrees $k = 1, 2,$ and $3$.
\begin{figure}[h!]
    \centering
    \includegraphics[width = 0.7\textwidth]{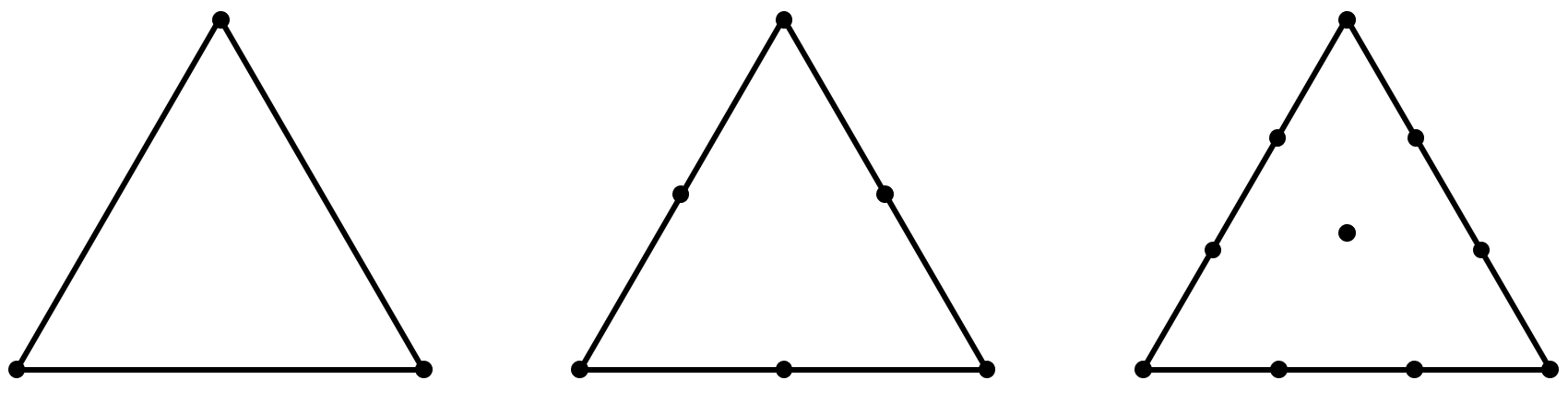}
    \caption{Left, the degrees of freedom for a triangular element with $k = 1$. Middle, the degrees of freedom for a triangular element with $k = 2$. Right, the degrees of freedom for a triangular element with $k = 3$.}
    \label{fig:P1_P2_dof}
\end{figure}

\subsection{Synchronization}
\noindent The first step towards transferring the solution from the source grid to the target grid requires preparing the solution for HCT interpolation. This is because the effectiveness of HCT interpolation is maximized if it is supplied with smooth input data. Ideally, this data would be $\mathcal{C}^1$-continuous throughout the domain in order to maintain inter-element continuity of the solution and its first derivatives. This is the primary goal of our synchronization procedure.
%However, for our purposes, it is sufficient to construct $\mathcal{C}^0$-continuous solution data, with some degree of derivative smoothing. This is the primary goal of our synchronization procedure.

Given a discontinuous solution, a smoothing operator can be used to synchronize (average) the solution and its gradient to the nodes; in addition the gradient of the solution can be averaged at the edge midpoints. 
%In addition, HCT splines require midpoint derivatives, and therefore the gradient of the solution must also be synchronized at the midpoints. 
For the $k = 1$ case, the solution is stored only at the vertices. To obtain the derivatives at each midpoint, we can compute the derivatives at the vertices within each element, and copy the derivative values to the midpoints. This operation is valid because the derivatives are constant over each element when $k=1$. The process is illustrated in red, in Figure~\ref{fig:synch}. 
%This midpoint storage operation happens inside of each cell before moving to the next step of the synchronization procedure.
For the $k = 2$ and $k =3$ cases, the polynomial solution on each element must be differentiated and directly evaluated at the vertices and edge midpoints.

Given derivative values at the vertices and midpoints, and solution values at the vertices, we average the solution and derivative information between elements in order to create 
$\mathcal{C}^{1}$-continuous data throughout the domain. 
%$\mathcal{C}^{0}$-continuous data throughout the domain. 
This process is shown in blue, in Figure \ref{fig:synch}.
\begin{figure}[h!]
    \centering
    \includegraphics[width = 0.6\textwidth]{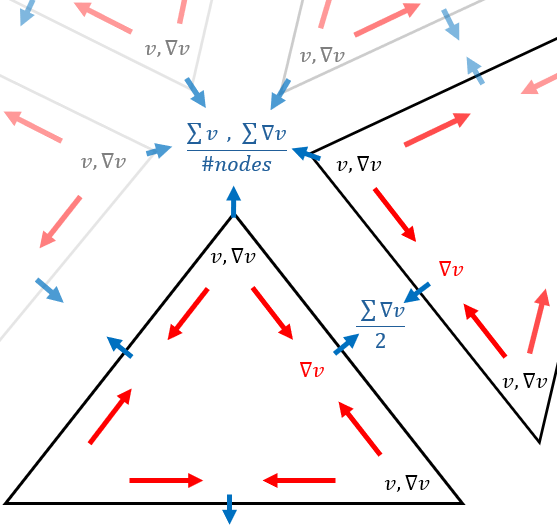}
    \caption{The synchronization procedure for a discontinuous finite element solution, for the $k=1$ case.}
    \label{fig:synch}
\end{figure}

\subsection{HCT interpolation}
\noindent Our method uses the HCT-C element, (complete Hsieh-Clough-Tocher element), in accordance with the terminology of Bernadou and Hassan~\cite{bernadou1981basis}. This element is illustrated in Figure \ref{fig:HCT_tri}.
\begin{figure}[h!]
    \centering
    \includegraphics[width = 1.0\textwidth]{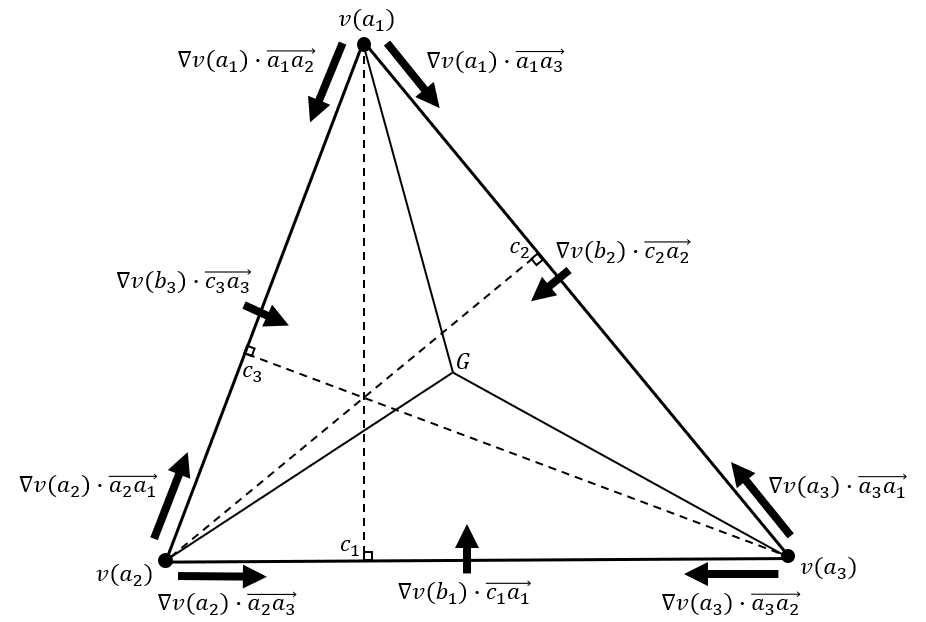}
    \caption{The HCT-C triangle as defined by Bernadou and Hassan~\cite{bernadou1981basis}, shown with its degrees of freedom, $\Sigma_T$.}
    \label{fig:HCT_tri}
\end{figure}
A single triangle element denoted by $T$, is defined by vertices $\{ a_1,a_2,a_3 \}$. Note that henceforth, all indices will assume values of 1, 2, 3, modulo 3, (except anything that returns 0 is mapped to 3). For example, index 4 maps to index 1, index 5 to index 2, and index 6 to index 3. The subtriangles $T_i$ are formed by $\{G, a_{i+1}, a_{i+2} \}$, where $G$ is the barycenter of $T$. The edges of $T$ are formed such that, $\ell_i := \{a_{i+1}, a_{i+2}\}$. The midpoint of edge $\ell_i$ is denoted by $b_i$, and the orthogonal projection of vertex $a_i$ onto $\ell_i$ is denoted by $c_i$. 

Consider $\mathcal{HCT}(T)$, the set of piecewise cubic polynomials defined on the element $T$ such that
\begin{align*}
    \mathcal{HCT}(T) = \{p \in \mathcal{C}^1(T): p\rvert_{T_i} \in \mathcal{P}_3(T_i), i = 1,2,3 \}.
\end{align*}
%
%Recall that after synchronization,  $v\in\mathcal{C}^{0}(\Omega)$ and $v|_{T}\in \mathcal{C}^{1}(T)$.
Recall that after synchronization,  $v\in\mathcal{C}^{1}(\Omega)$.
We can project this function onto the HCT space, $\mathcal{HCT}(T)$, in a piecewise fashion, by defining degrees of freedom for $v$, denoted by $\Sigma_T$. The set $\Sigma_T$ consists of the function $v$ evaluated at the vertices, $v(a_i)$; the directional derivative of the function evaluated at the vertices in the direction of each edge emanating from that vertex, $\nabla v(a_i)\cdot \overrightarrow{a_i a_{i+2}}$, and $\nabla v(a_i)\cdot \overrightarrow{a_i a_{i+1}}$; lastly, the directional derivative of the function evaluated at the midpoint of each edge along the direction of the orthogonal projection, $\nabla v(b_i)\cdot \overrightarrow{c_ia_i}$. In summary,
\begin{align*}
    \Sigma_T = \{ v(a_i), \nabla v(a_i)\cdot \overrightarrow{a_i a_{i+2}}, \nabla v(a_i)\cdot \overrightarrow{a_i a_{i+1}}, \nabla v(b_i)\cdot \overrightarrow{c_ia_i}, i = 1,2,3 \}.
\end{align*}
Bernadou and Hassan~\cite{bernadou1981basis} follow Argyris et al.~\cite{argyris1968tuba} and introduce `eccentricity parameters', $E_i$, for each triangle $T$. Using only the Cartesian coordinates of the vertices of $T$, these parameters define the slope normal to the midpoint nodes and are defined as follows
\begin{align}
    E_i = \frac{\|\ell_{i+2}\|^2-\|\ell_{i+1}\|^2}{\|\ell_i\|^2},
    \label{eqn: eccparam}
\end{align}
where $\|\ell_i\|$ is the Euclidean norm of the edge $\ell_i$. The eccentricity parameters, along with barycentric or `area' coordinates are essential for calculating the basis functions for $T_i$.  With the preliminaries established, the basis functions and interpolant on $T_i$ are given by Bernadou and Hassan~\cite{bernadou1981basis} as follows
\begin{align}
    \pi_{T_i}v = \sum_{j=i}^{i+2}\left[v(a_i)r_{i,j}^0 + \nabla v(a_i)\cdot \overrightarrow{a_i a_{i+2}}r_{i,j,j+2}^1 + \nabla v(a_i)\cdot \overrightarrow{a_i a_{i+1}}r_{i,j,j+1}^1+\nabla v(b_i)\cdot \overrightarrow{c_ia_i}r_{\perp i, j}^1\right],
\end{align}
where $\pi_{T_i}v \in \mathcal{P}_3(T_i)$, and $r_{i,j}^0, r_{i,j,j+2}^1, r_{i,j,j+1}^1$,  etc.~are basis functions for the subtriangle $T_i$. The precise definitions of these basis functions can be found in~\cite{bernadou1981basis,bernadou1980triangles}.

\subsection{$L_2$-Projection}
\noindent In order to facilitate the $L_2$-projection, quadrature points are generated on each individual element of the target mesh and passed into the HCT algorithm. Perhaps the most important step of the transfer process is identifying the elements of the source grid which contain the quadrature points of elements of the target grid. For this operation, there are several strategies. One is recommended by Alauzet and Mehrenberger~\cite{alauzet2010p1}, another which was used in this study is a bounding volume hierarchy or BVH~\cite{stich2009spatial}, which organizes the grid into a tree structure with the most primitive data type---nodes---as the leaves. It is well known that the BVH method is not good at identifying points that lie near an element boundary. When this problem arises, the BVH method will indicate that a particular quadrature point (or group of points) does not belong to any of the elements on the source mesh. In order to rectify this issue, we use a robust matrix determinant method for determining where the point (or points) are located. 

The method works as follows: 
Assume triangle $T_m$ is defined by points, $m_1 = \{x_1,y_1\}$, $m_2 = \{x_2,y_2\}$, $m_3 = \{x_3,y_3\}$, and point $o =\{x_4,y_4\}$ is a point which could be inside or outside of $T_m$. To determine whether $o$ is inside of $T_m$, the area of $T_m$ is compared with the three subtriangles formed by connecting the edges of $T_m$ with $o$, as shown in Figure~\ref{fig:area_check}.
\begin{figure}[h!]
\centering
    \begin{subfigure}{0.5\textwidth}
        \centering
        \includegraphics[width=1.0\textwidth]{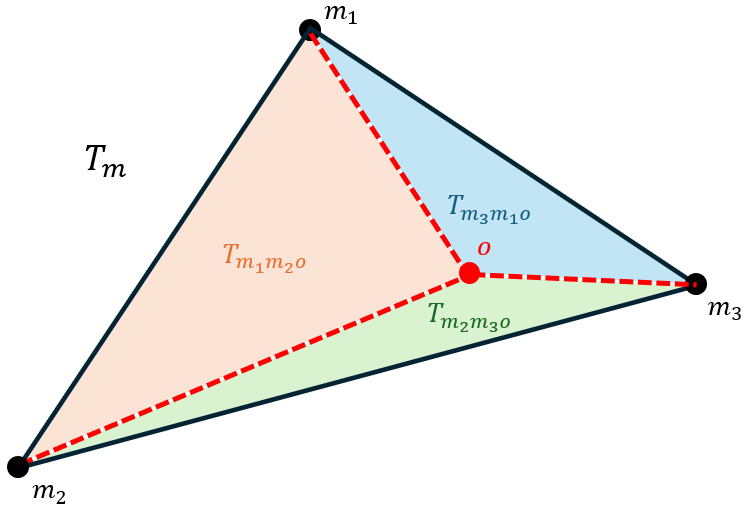}
    \end{subfigure}%
    \begin{subfigure}{0.5\textwidth}
        \centering
        \includegraphics[width=1.0\textwidth]{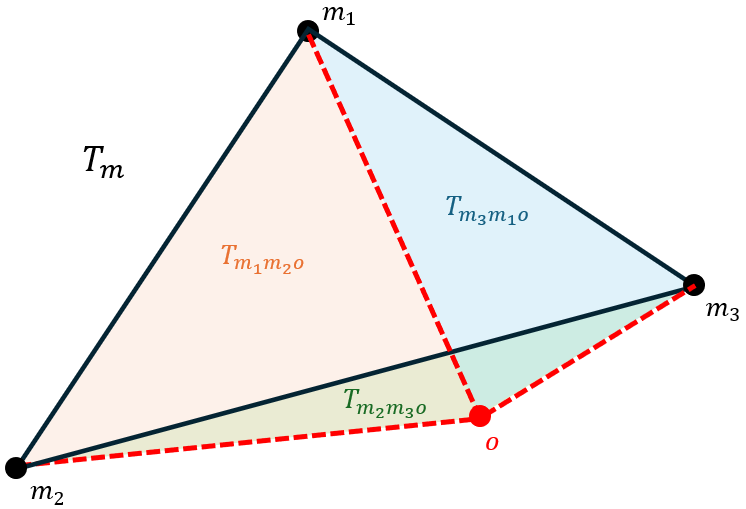}
    \end{subfigure}
    \caption{Left, triangle $T_m$ and three subtriangles formed by $o$, where $o$ is inside $T_m$. Right, triangle $T_m$ and three subtriangles formed by $o$, where $o$ is outside of $T_m$.}
    \label{fig:area_check}
\end{figure}
If the point $o$ is located inside of $T_m$, then 
\begin{align*}
    |T_m| = | T_{m_1m_2o}|+|T_{m_2m_3o}|+|T_{m_3m_1o}|,
\end{align*}
or equivalently, the areas of the subtriangles add up to the area of $T_m$. Otherwise, if $o$ is outside of $T_m$, then 
\begin{align*}
    |T_m| < |T_{m_{1}m_{2}o}|+|T_{m_{2}m_{3}o}|+|T_{m_{3}m_{1}o}|,
\end{align*}
which is illustrated by the right part of Figure~\ref{fig:area_check}. The matrix determinant is used to calculate the areas of triangle $T_m$ and its subtriangles in a robust manner. 
% For example, if one computing the area of $T_m$ is shown below
% %
% \begin{align*}
%     |T_m| &= 1/2
%     \begin{vmatrix}
%         x_1 & y_1 & 1\\
%         x_2 & y_2 & 1\\
%         x_3 & y_3 & 1
%     \end{vmatrix}\\
%     &= 1/2|x_1(y_2-y_3)+x_2(y_3-y_1)+x_3(y_1-y_2)|.
% \end{align*}

Once a  quadrature point is located within an element on the source mesh (see Figure \ref{fig:find_quad}), we must determine the HCT subelement, $T_i$, to which it belongs. This can be ascertained using the matrix determinant method mentioned previously. 
\begin{figure}[h!]
\centering
    \begin{subfigure}{0.5\textwidth}
        \centering
        \includegraphics[width=1.0\textwidth]{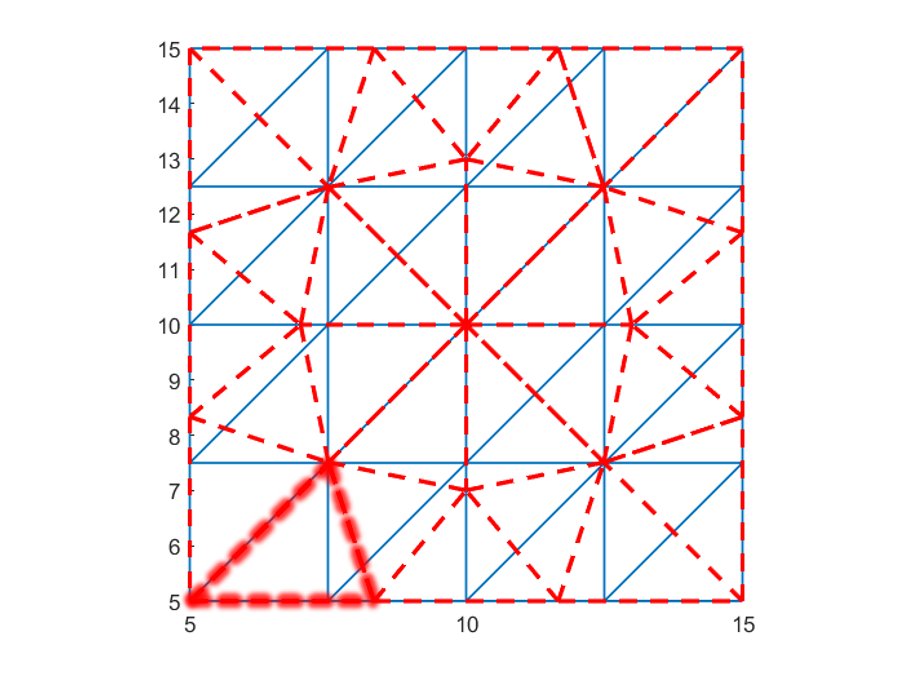}
    \end{subfigure}%
    \begin{subfigure}{0.5\textwidth}
        \centering
        \includegraphics[width=1.0\textwidth]{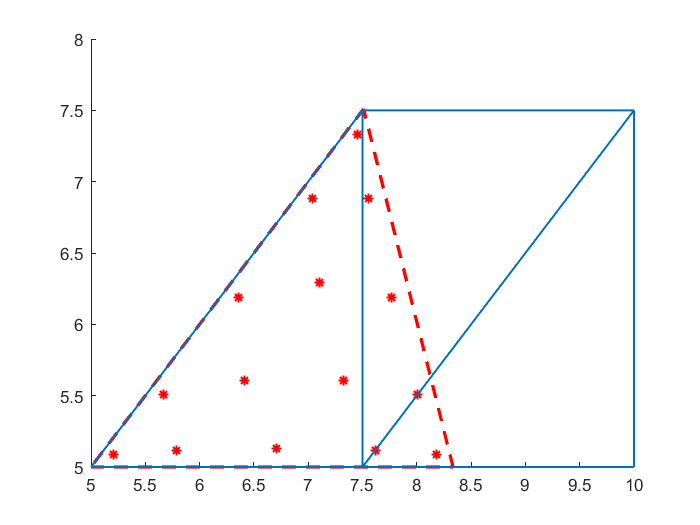}
    \end{subfigure}
    \caption{Left, the source grid (blue), with the corresponding target grid displayed on top (red). Right, the quadrature points which belong to the target element highlighted in dark red in the left image, and the corresponding source elements which also contain the quadrature points.}
    \label{fig:find_quad}
\end{figure}
Next, the HCT solution is evaluated at the quadrature points. In turn, the quadrature points are used to transfer the surrogate solution to the nodes of the target mesh via $L_2$-projection. 

The $L_2$-projection procedure is as follows. First, we construct a linear system, $M \hat{f} = B$, where $M$ is the `mass matrix',  $\hat{f}$ is a vector of coefficients for the modal basis functions $\psi_j$, and $B$ is the `forcing vector'. The mass matrix is defined such that
\begin{align}
    M_{ij} = \int_{T} \psi_i\left(\vec{r}(\vec{x})\right)\psi_j\left(\vec{r}(\vec{x})\right)\hspace{1mm}d\vec{x} =  \frac{A_T}{A_R}\int_{T_R} \psi_i\left(\vec{r}\right)\psi_j\left(\vec{r}\right)\hspace{1mm}d\vec{r},
    \label{lhs_integral}
\end{align}
where $\vec{x}$ are the physical space coordinates,
\begin{align}
    \vec{r} = \lambda_1\vec{\omega}_1+\lambda_2\vec{\omega}_2+\lambda_3\vec{\omega}_3,
\end{align}
are the reference space coordinates, and where $A_T/A_R$ is the ratio of the area of the physical triangle, $T$, to the area of the reference triangle, $T_R$. Here, $\vec{\omega}_{\ell}$ for $\ell =1, 2, 3$ is the vector of coordinates for each vertex of the reference space triangle, and $\{\lambda_1, \lambda_2, \lambda_3\}$ is the set of barycentric coordinates for a generic point in reference space.
%the quadrature point located inside the target element and mapped to the reference triangle.
The forcing vector on the right hand side is defined such that
\begin{align}
    B_i = \frac{A_T}{A_R} \int_{T_R} v\left(\vec{x}(\vec{r})\right)\psi_i \left(\vec{r}\right) \hspace{1mm} d\vec{r}.
    \label{rhs_integral}
\end{align}
%
% where
% %
% \begin{align}
%     \vec{x} = \sum_{i=1}^{N_p} \vec{v}_i\hat{\ell}_i\left(\vec{r}\right).
% \end{align}
%
%Here, $\hat{\ell}_i$ is a multidimensional Lagrange polynomial~\cite{hesthaven2007nodal}. 
A unique set of coefficients for the modal basis functions are obtained on each element by solving
\begin{align}
    \hat{f} = M_{ij}^{-1}B.
\end{align}
Note, the integrals in Eqs.~\eqref{lhs_integral} and \eqref{rhs_integral} are not computed analytically, rather a quadrature rule of sufficient strength is used to compute these integrals in practice. Once the coefficients of the modal basis functions are computed, the solution can be transferred to the nodes of the target mesh by taking the sum of the basis functions evaluated at the nodes in reference space times the computed coefficients.

%the solution can be transferred to the nodes of the target mesh with the aid of the Vandermonde matrix.

%multiplying the modal basis functions evaluated at the target nodes by the computed coefficients. As a reminder to the reader, all of the calculations of nodal values are done in reference space using barycentric coordinates with respect to the triangle which the nodes belong to, e.g. the target nodes are with respect to the target simplex, making the barycentric coordinates of each node trivial, and the quadrature points are with respect to the target simplex as well. 

The accuracy and conservation properties of the $L_2$-projection method are heavily dependent on the quadrature rule (or rules) that are used. This opens up a parallel discussion regarding how certain parameters of finite element methods can impact the accuracy of the solution. For finite element methods, one may enhance the accuracy through two types of refinement: (i) $p$-refinement, i.e., increasing the order of the polynomial representation---for example, upgrading from $k = 1$ elements to $k = 2$ elements, and (ii) $h$-refinement, i.e., size-based refinement of the elements---for example, splitting an element into four subelements to increase resolution. These dials also exist for quadrature rules.
\begin{figure}[h!]
\centering
    \begin{subfigure}{0.33\textwidth}
        \centering
        \includegraphics[width=1.0\textwidth]{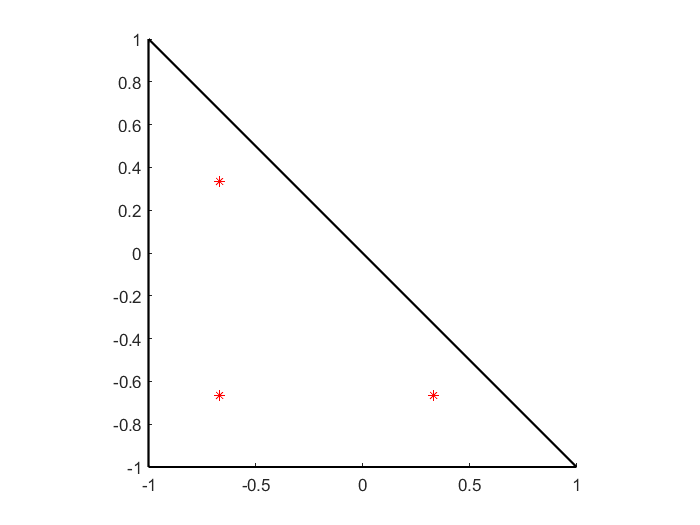}
    \end{subfigure}%
    \begin{subfigure}{0.33\textwidth}
        \centering
        \includegraphics[width=1.0\textwidth]{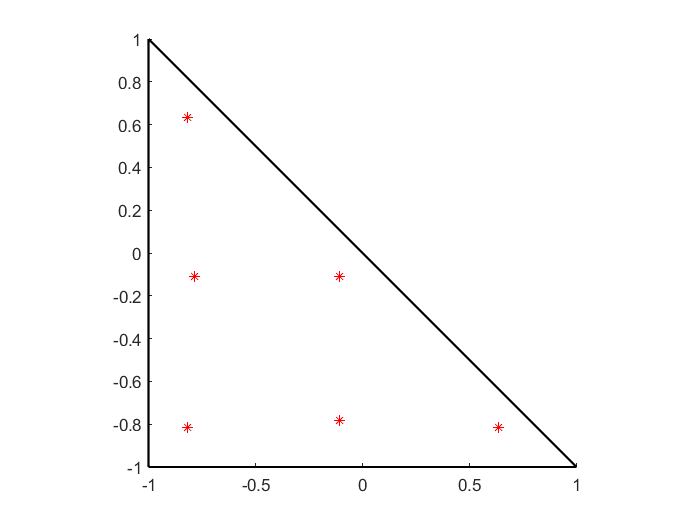}
    \end{subfigure}
    \begin{subfigure}{0.33\textwidth}
        \centering
        \includegraphics[width=1.0\textwidth]{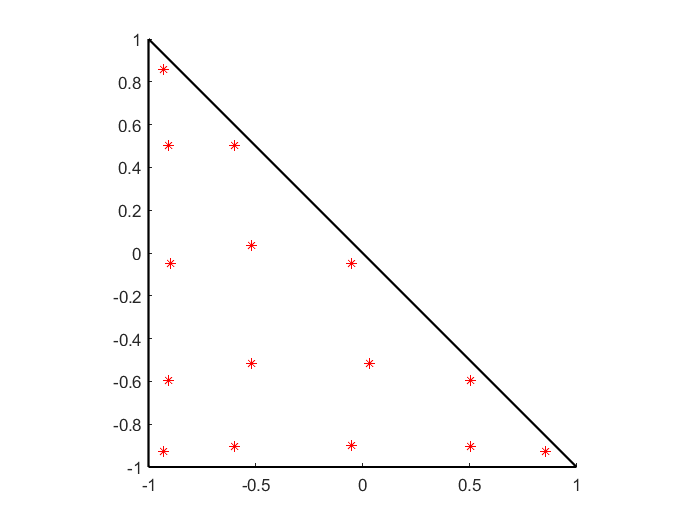}
    \end{subfigure}
    \begin{subfigure}{0.33\textwidth}
        \centering
        \includegraphics[width=1.0\textwidth]{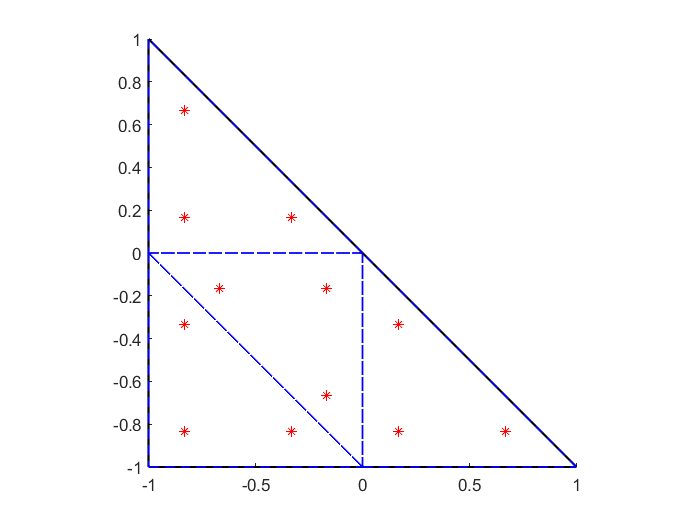}
    \end{subfigure}%
    \begin{subfigure}{0.33\textwidth}
        \centering
        \includegraphics[width=1.0\textwidth]{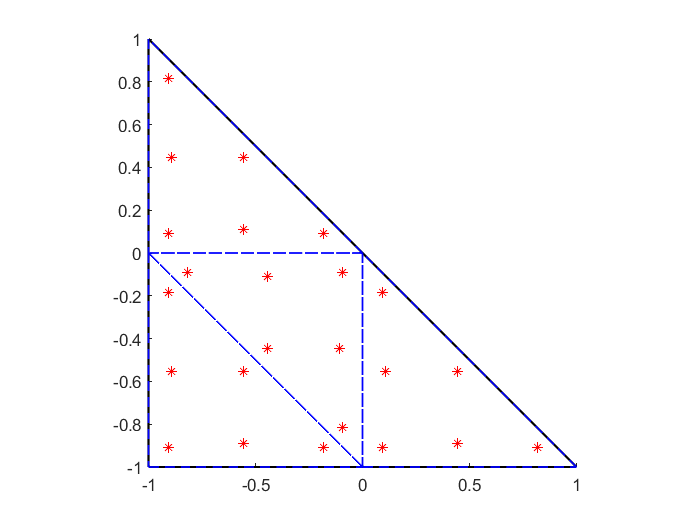}
    \end{subfigure}
    \begin{subfigure}{0.33\textwidth}
        \centering
        \includegraphics[width=1.0\textwidth]{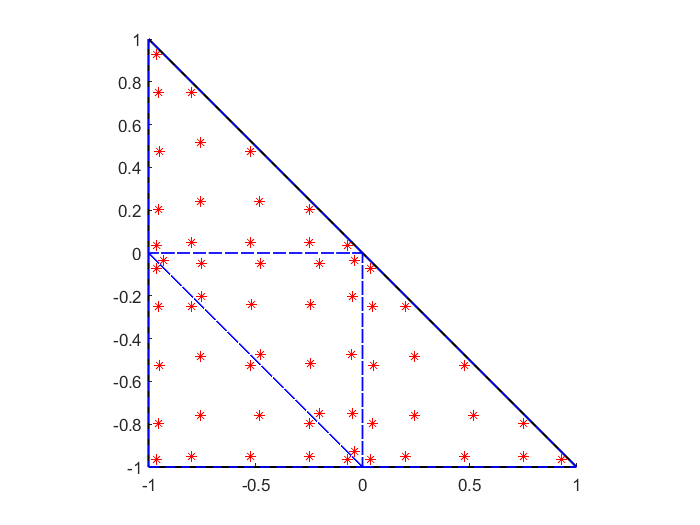}
    \end{subfigure}
        \begin{subfigure}{0.33\textwidth}
        \centering
        \includegraphics[width=1.0\textwidth]{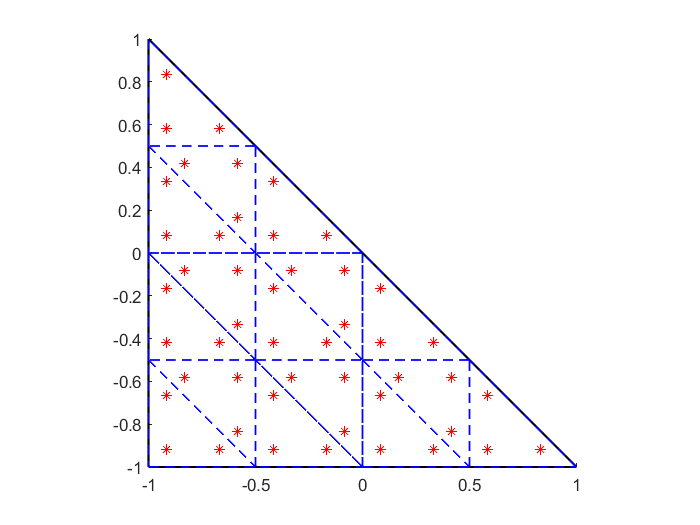}
    \end{subfigure}%
    \begin{subfigure}{0.33\textwidth}
        \centering
        \includegraphics[width=1.0\textwidth]{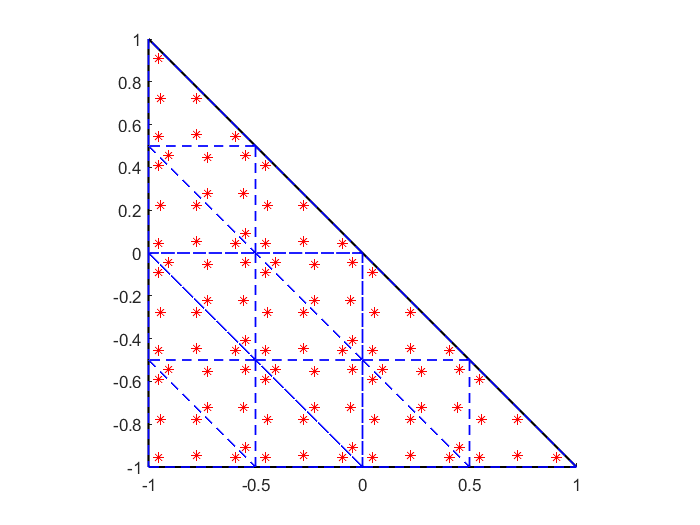}
    \end{subfigure}
    \begin{subfigure}{0.33\textwidth}
        \centering
        \includegraphics[width=1.0\textwidth]{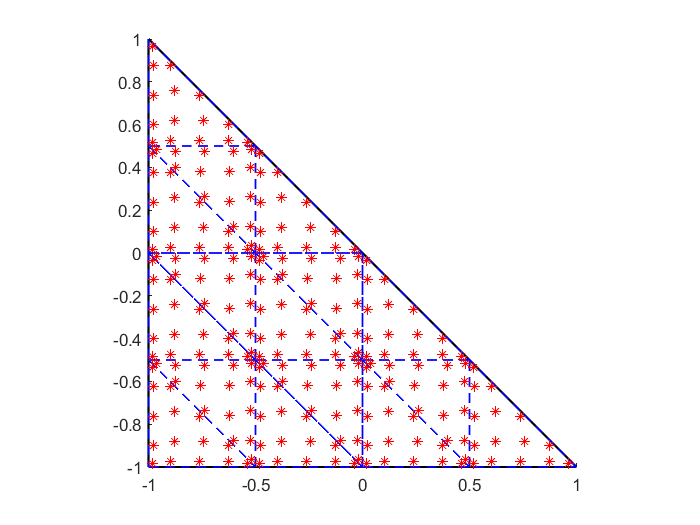}
    \end{subfigure}
    \caption{Top row, shown left to right, 3-point, 6-point, and 15-point quadrature rules on a reference triangle. Middle row, same quadrature rules as above with one $h$-refinement. Bottom row, same rules as the top row with two $h$-refinements, with respect to the top row. All quadrature rules are taken from~\cite{williams2014symmetric}.}
    \label{fig:quad_ref}
\end{figure}
In particular, `$p$-refinement' for a quadrature rule involves increasing the degree of the quadrature rule, and `$h$-refinement', involves subdividing the reference element, and applying the same strength quadrature rule on these subdivisions. In Figure \ref{fig:quad_ref}, the top row shows how `$p$-refinement' works with quadrature. Moving down each of the columns of Figure \ref{fig:quad_ref}, is what `$h$-refinement' looks like. 
\begin{figure}[h!]
\centering
    \begin{subfigure}{0.5\textwidth}
        \centering
        \includegraphics[width=1.0\textwidth]{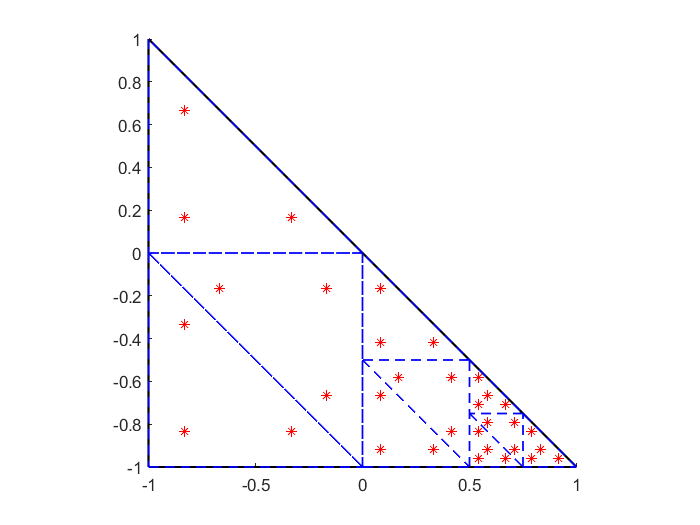}
    \end{subfigure}%
    \begin{subfigure}{0.5\textwidth}
        \centering
        \includegraphics[width=1.0\textwidth]{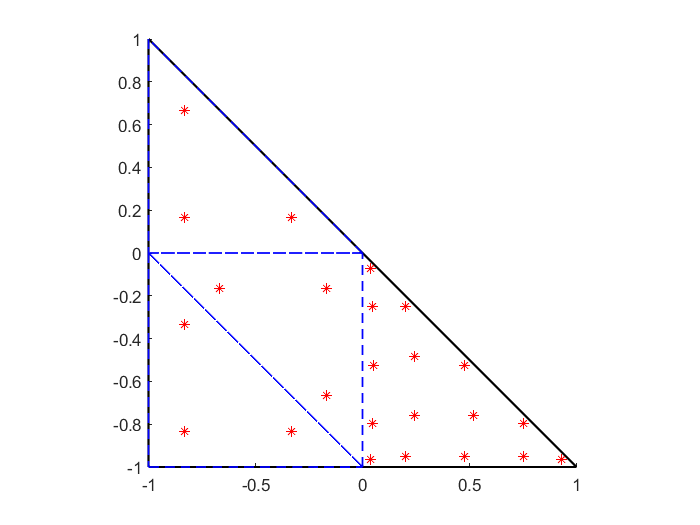}
    \end{subfigure}
    \caption{Left, a 3-point quadrature rule with non-uniform $h$-refinement. Right, a 3-point quadrature rule with one $h$-refinement, and $p$-refinement in the rightmost subtriangle.}
    \label{fig:nested_quad_ref}
\end{figure}

Based on the discussion above, it makes sense to construct an \emph{adaptive-quadrature} strategy, which leverages $h$- and $p$-refinement of quadrature rules. Naturally, this adaptive-quadrature strategy would be analogous to $hp$-adaptivity for finite element methods.
%
%In a similar fashion to how meshes can be refined automatically using an error indicator to control element sizing, spacing, or polynomial order, quadrature refinement shows similar promise.
%
Figure~\ref{fig:nested_quad_ref} shows examples of how such a strategy could intelligently $hp$-refine to improve conservation of mass during the projection process. 
%Imagine subdividing a rule, several times until a feature is resolved in terms of error, as seen in the left image of Figure~\ref{fig:nested_quad_ref}, or increasing the strength of the rule within a subdivided triangle, until a level of accuracy is achieved, as shown in the right image of Figure~\ref{fig:nested_quad_ref}. 
In this work, we experiment with several simplistic quadrature rules constructed manually, following this approach. However, ideally we would construct an automatic adaptive-quadrature strategy which performed $hp$-refinement based on a local error indicator. Such an error indicator could be constructed using \emph{nested-quadrature} rules. Unfortunately, to the author's knowledge, nested-quadrature rules have not yet been constructed for triangular elements. While this, at least temporarily, precludes an automatic adaptive-quadrature strategy, there is significant potential in this area, as our subsequent numerical experiments will show, (cf.~Section~\ref{mass_conservation_tests}).

%However, as our numerical results will later show, there is a lot of promise in this area for future work.

\section{Solution-Transfer Theory}
\label{sec;theory}

In this section, we introduce some preliminary definitions and assumptions. Then, we present theoretical results which govern the order of accuracy of our solution-transfer method.

\subsection{Theoretical Preliminaries}

Recall that $\mathcal{T}_a$ is the source mesh and $\mathcal{T}_b$ is the target mesh. We can denote the set of all interior edges of $\mathcal{T}_a$ as $\mathcal{E} (\mathcal{T}_a)$. Next, we introduce the following space
\begin{align*}
    V^{k}_{\mathcal{T}} &= \left\{v \in L_{2}(\Omega) : v_T = v|_T \in \mathcal{P}_k (T) \quad \forall T \in \mathcal{T} \right\},
\end{align*}
where $\mathcal{T}$ is a surface mesh of triangles, and $\mathcal{P}_k$ is the space of polynomials of degree $\leq k$.

\begin{assumption}[Jump Identities]
    We assume that the following jump identities hold for all $v_h \in V^{k}_{\mathcal{T}}$, when $k\geq 1$, 
    \begin{align*}
         \left\| \llbracket v_{h} \rrbracket_e \right\|_{L_2(e)} & \lesssim C_{I} h^{k+1/2} , \\[1.5ex]
       \left\| \llbracket \partial v_{h}/\partial n \rrbracket_e \right\|_{L_2(e)} & \lesssim C_{II} h^{k-1/2}   , 
       %\\[1.5ex]
       %\left\| \llbracket \partial^{2} v_{h}/\partial n^{2} \rrbracket_e \right\|_{L_2(e)} & \lesssim C_{III} h^{k-3/2}
    \end{align*}
    where $C_{I} = C_{I}(v, \nabla v)$ and $C_{II} = C_{II}(v, \nabla v)$ are constants that depend on the exact solution $v$ and its first derivatives, and $e \in \mathcal{E}(\mathcal{T})$.
    \label{assumption_jump_2d}
\end{assumption}

\subsection{Theoretical Results}
\label{sec;Theoretical Results}
\noindent Let us now introduce the following space
\begin{align*}
    W_{\mathcal{T}} &= \left\{w \in H^{2}(\Omega) : w_T = w|_T \in \mathcal{HCT} (T) \quad \forall T \in \mathcal{T} \right\},
\end{align*}
where we recall that $\mathcal{HCT}$ is the space of piecewise-cubic HCT functions. Next, we can define the following smoothing operator.

\begin{definition}[$H^2$ Smoothing Operator]
    $E(\cdot): V^{k}_{\mathcal{T}} \rightarrow W_{\mathcal{T}}$ is a smoothing and synchronization operator that has the following properties
    \begin{align*}
        (E(v))(p) &= \frac{1}{|\mathcal{T}^p|}\sum_{T \in \mathcal{T}^p} v_T(p) \qquad \; \; \; \quad \forall p \in \mathcal{V} (\mathcal{T}),\\[1.5ex]
        \nabla (E(v))(p) &= \frac{1}{|\mathcal{T}^p|}\sum_{T \in \mathcal{T}^p} \nabla v_T(p) \quad \qquad \forall p \in \mathcal{V} (\mathcal{T}),\\[1.5ex]
        \frac{\partial (E(v))}{\partial n} (m) &= \frac{1}{|\mathcal{T}^m|}\sum_{T \in \mathcal{T}^m} \frac{\partial v_T}{\partial n} (m) \qquad \forall m \in \mathcal{M} (\mathcal{T}),
    \end{align*}
    where $\mathcal{T}^p = \{T \in \mathcal{T} : p \in \partial T \}$ is the set of triangles that share a vertex $p$, and $\mathcal{T}^m = \{T \in \mathcal{T} : m \in \partial T \}$ is the set of triangles that share an edge midpoint $m$.
    \label{definition_h2_2d}
\end{definition}
\begin{lemma}[Bound on $H^2$ Smoothing Operator]
    Suppose $v \in V^{k}_{\mathcal{T}}$ and $E$ is defined in accordance with Definition~\ref{definition_h2_2d}; the following result holds
    \begin{align*}
        \left\| v - E(v) \right\|^{2}_{L_2(\Omega)} & \lesssim \sum_{e \in \mathcal{E} (\mathcal{T})} \left( |e| \left\| \llbracket v \rrbracket_e \right\|^{2}_{L_2(e)} + |e|^{3} \left\| \llbracket \partial v/\partial n \rrbracket_e \right\|^{2}_{L_2(e)} \right),\\[1.5ex]
        | v - E(v) |^{2}_{H^2(\Omega ,\mathcal{T})} &\lesssim \sum_{e \in \mathcal{E} (\mathcal{T})} \left( \frac{1}{|e|^3} \left\| \llbracket v \rrbracket_e \right\|^{2}_{L_2(e)} + \frac{1}{|e|} \left\| \llbracket \partial v/\partial n \rrbracket_e \right\|^{2}_{L_2(e)} \right),
    \end{align*}
    where $e \in \mathcal{E}(\mathcal{T})$ is a generic edge.
    % %
    % Or, equivalently if $|e| \approx h$, then
    % %
    % \begin{align*}
    %     \left\| v - E(v) \right\|^{2}_{L_2(\Omega)} &\lesssim \sum_{e \in \mathcal{E} (\mathcal{T})} \left( h \left\| \llbracket v \rrbracket_e \right\|^{2}_{L_2(e)} + h^{3} \left\| \llbracket \partial v/\partial n \rrbracket_e \right\|^{2}_{L_2(e)} \right),\\[1.5ex]
    %     %
    %     | v - E(v) |^{2}_{H^2(\Omega ,\mathcal{T})} &\lesssim \sum_{e \in \mathcal{E} (\mathcal{T})} \left( \frac{1}{h^3} \left\| \llbracket v \rrbracket_e \right\|^{2}_{L_2(e)} + \frac{1}{h} \left\| \llbracket \partial v/\partial n \rrbracket_e \right\|^{2}_{L_2(e)} \right).
    % \end{align*}
    \label{lemma_h2_2d_k1}
\end{lemma}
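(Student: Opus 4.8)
The plan is to establish both bounds \emph{elementwise} and then sum, exploiting the fact that $w := v - E(v)$ lies, on each triangle, in the finite-dimensional space $\mathcal{HCT}(T)$. First I would localize: since $E(v) \in W_{\mathcal{T}} \subset H^2(\Omega)$ while $v \in V^k_{\mathcal{T}}$ is only piecewise polynomial, write $\|w\|_{L_2(\Omega)}^2 = \sum_{T} \|w\|_{L_2(T)}^2$ and $|w|_{H^2(\Omega,\mathcal{T})}^2 = \sum_T |w|_{H^2(T)}^2$. The key structural observation (valid because $k \le 3$, so that $v_T \in \mathcal{P}_3(T) \subset \mathcal{HCT}(T)$) is that $w|_T = v_T - E(v)|_T$ itself belongs to $\mathcal{HCT}(T)$. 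Hence $w|_T$ equals its own HCT nodal expansion, $w|_T = \sum_\alpha c_\alpha\,\phi_\alpha^T$, where $\{\phi_\alpha^T\}$ are the HCT basis functions dual to the degrees of freedom $\Sigma_T$, and each coefficient is the difference $c_\alpha = N_\alpha(v_T) - N_\alpha(E(v))$ between the local value of the degree of freedom $N_\alpha$ on $v_T$ and its synchronized (averaged) value prescribed by Definition~\ref{definition_h2_2d}.

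Next I would obtain scaling estimates for the basis functions by pulling back to a reference triangle through the affine map. Because the gradient and normal-derivative degrees of freedom in $\Sigma_T$ are directional derivatives taken along edge vectors (and the projection vector $\overrightarrow{c_i a_i}$), all of which have length $\sim h_T$, every basis function scales identically: $\|\phi_\alpha^T\|_{L_2(T)} \lesssim h_T$ and $|\phi_\alpha^T|_{H^2(T)} \lesssim h_T^{-1}$, with constants depending only on shape-regularity. Combined with a Cauchy--Schwarz inequality over the fixed number of degrees of freedom, this yields $\|w\|_{L_2(T)}^2 \lesssim h_T^2 \sum_\alpha c_\alpha^2$ and $|w|_{H^2(T)}^2 \lesssim h_T^{-2}\sum_\alpha c_\alpha^2$.

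The heart of the argument is bounding the coefficient differences $c_\alpha$ by the edge jumps. Each averaged degree of freedom minus the local one is, by Definition~\ref{definition_h2_2d}, an average of pairwise differences $N_\alpha(v_T) - N_\alpha(v_{T'})$; walking around the shared vertex (or across the single shared edge, for midpoint degrees of freedom) these telescope into sums of \emph{jumps} $\llbracket \cdot \rrbracket_e$ evaluated pointwise at vertices and midpoints. For a value degree of freedom this is $\llbracket v \rrbracket_e$ at a vertex; for a gradient degree of freedom I would decompose the gradient jump into its normal part $\llbracket \partial v/\partial n\rrbracket_e$ and its tangential part, noting that the latter is exactly $\partial_t \llbracket v\rrbracket_e$, the edge-tangential derivative of the value jump; for the midpoint degree of freedom only the normal jump appears. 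I then convert these pointwise quantities into $L_2(e)$ norms via one-dimensional inverse estimates on the edge polynomials, $|q(p)|^2 \lesssim |e|^{-1}\|q\|_{L_2(e)}^2$ and $\|\partial_t q\|_{L_2(e)} \lesssim |e|^{-1}\|q\|_{L_2(e)}$. Tracking the powers of $|e|$ (and the factor $h_T^2 \sim |e|^2$ picked up from the directional derivatives) gives, per element, $\sum_\alpha c_\alpha^2 \lesssim \sum_{e}\big(|e|^{-1}\|\llbracket v\rrbracket_e\|_{L_2(e)}^2 + |e|\,\|\llbracket \partial v/\partial n\rrbracket_e\|_{L_2(e)}^2\big)$ over the edges in the local patch of $T$. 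Inserting this into the two elementwise estimates and summing over $\mathcal{T}$---using shape-regularity so that $h_T \sim |e|$ and each edge belongs to only boundedly many patches---produces precisely the claimed $L_2$ and $H^2$ bounds.

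I expect the main obstacle to be the treatment of the gradient degrees of freedom: one must recognize that the tangential component of the gradient jump is \emph{not} independent data but is controlled by the value jump $\llbracket v\rrbracket_e$ through differentiation along the edge, and then carefully account for the resulting powers of $|e|$ so that the tangential contribution lands in the $\|\llbracket v\rrbracket_e\|$ term (with the $|e|^{-3}$ weight in the $H^2$ estimate) rather than being mismatched. The secondary bookkeeping hurdle is the telescoping of degree-of-freedom differences around a vertex, which relies on shape-regularity to bound the number of elements meeting at a vertex and the overlap of patches in the final summation.
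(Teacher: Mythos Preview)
Your proposal is correct and is precisely the enriching-operator argument of Brenner that the paper invokes: the paper does not supply its own proof but simply cites Corollary~2.2 of~\cite{brenner2004poincare}, and what you have outlined---localize, use that $v_T-E(v)|_T\in\mathcal{HCT}(T)$ (here using $k\le 3$), scale the HCT basis via the reference element, express each degree-of-freedom discrepancy as a telescoping sum of edge jumps, split gradient jumps into normal and tangential parts with $\partial_t\llbracket v\rrbracket_e$ absorbed into the value-jump term, and close with inverse inequalities on the edge---is exactly how that corollary is established. Your identification of the tangential/normal decomposition as the main technical point is on target.
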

\begin{proof}
    The proof appears in Corollary 2.2 of~\cite{brenner2004poincare}.
\end{proof}

Now that the smoothing operator is defined, we can introduce the following projection operator.
\begin{definition}[Projection Operator]
    $I_{h_b}(\cdot): W_{\mathcal{T}_a} \rightarrow V_{\mathcal{T}_b}^{k}$ is a projection operator that transfers the smooth solution from mesh~$\mathcal{T}_a$ to mesh $\mathcal{T}_b$.
\end{definition}

\begin{lemma}[Bound on Projection Operator] Suppose that $w \in H^{q}(\Omega)$, then
\begin{align}
    \left\| w - I_{h_b}(w) \right\|_{L_2(\Omega)} \lesssim h_{b}^{\mu} |w|_{H^{q}(\Omega)}, \label{interp_result_w}
\end{align}
where $\mu = \min(k+1, q)$. %Often in practice $\mu \geq \min(k+1, q)$. 
\label{lemma_interp_bound_2d_k1}
\end{lemma}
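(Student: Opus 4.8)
The plan is to exploit the fact that $I_{h_b}$ is the $L_2$-orthogonal projection onto the broken polynomial space $V_{\mathcal{T}_b}^{k}$, and then reduce the global estimate to a collection of local polynomial-approximation estimates furnished by the Bramble--Hilbert lemma. First I would observe that, because $V_{\mathcal{T}_b}^{k}$ imposes no inter-element continuity (and because the implementation solves $M\hat{f}=B$ one element at a time), the projection decouples element-by-element: on each $T \in \mathcal{T}_b$ the restriction $I_{h_b}(w)|_{T}$ is simply the local $L_2$-projection $P_T w$ of $w$ onto $\mathcal{P}_k(T)$. Consequently
\begin{align*}
    \left\| w - I_{h_b}(w) \right\|_{L_2(\Omega)}^{2} = \sum_{T \in \mathcal{T}_b} \left\| w - P_T w \right\|_{L_2(T)}^{2},
\end{align*}
so it suffices to bound each local term.

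Next, I would use the optimality (best-approximation) property of the $L_2$-projection, namely
\begin{align*}
    \left\| w - P_T w \right\|_{L_2(T)} = \inf_{p \in \mathcal{P}_k(T)} \left\| w - p \right\|_{L_2(T)},
\end{align*}
which lets me replace $P_T w$ by any convenient polynomial. To estimate the infimum, I would pull $w|_T$ back to a fixed reference triangle $\hat{T}$ under the affine map taking $\hat{T}$ to $T$, apply the Bramble--Hilbert (Deny--Lions) lemma on $\hat{T}$---using that the $L_2$-projection reproduces all polynomials of degree $\le \min(k, q-1)$---and then push the estimate back to $T$. Under the shape-regularity of the target mesh $\mathcal{T}_b$, the Jacobians of these affine maps are controlled by $h_T$, and tracking the powers of $h_T$ through the change of variables yields the local estimate
\begin{align*}
    \left\| w - P_T w \right\|_{L_2(T)} \lesssim h_T^{\mu} \, |w|_{H^{\mu}(T)}, \qquad \mu = \min(k+1, q),
\end{align*}
with a constant independent of $T$.

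Finally, I would square, sum over $T \in \mathcal{T}_b$, bound $h_T \le h_b$, and use the additivity of the $H^{\mu}$-seminorm over the mesh, $\sum_{T} |w|_{H^{\mu}(T)}^{2} = |w|_{H^{\mu}(\Omega)}^{2}$, to recover the stated global bound $\|w - I_{h_b}(w)\|_{L_2(\Omega)} \lesssim h_b^{\mu}\,|w|_{H^{\mu}(\Omega)}$, which matches the regularity term $|w|_{H^{q}(\Omega)}$ in the relevant regimes.

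The step requiring the most care is the affine scaling argument underpinning the local Bramble--Hilbert estimate: one must verify that the hidden constant is genuinely mesh-independent, which relies on an (implicit) shape-regularity assumption on $\mathcal{T}_b$ and on the correct bookkeeping of the $h_T$-powers produced by the change of variables in both the function and its derivatives. A secondary subtlety is the regime $q > k+1$, where the Bramble--Hilbert lemma naturally produces the seminorm $|w|_{H^{k+1}(T)}$ rather than $|w|_{H^q(T)}$; there the statement should be read with $\mu = k+1$ and the seminorm capped accordingly, or with $|w|_{H^q}$ understood as carrying the full $H^q$-regularity of the solution.
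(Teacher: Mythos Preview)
Your argument is correct and is precisely the standard textbook route to this estimate: localize the $L_2$-projection element-by-element, invoke best approximation, and apply Bramble--Hilbert with affine scaling under shape regularity. The paper itself does not supply a proof at all; it simply labels the result as standard and defers to a reference (Section~2.3 of~\cite{stogner2007c1}). In that sense you have furnished strictly more than the paper does, and your flagged caveats---the implicit shape-regularity assumption on $\mathcal{T}_b$, and the mismatch between $|w|_{H^{k+1}}$ and $|w|_{H^{q}}$ when $q>k+1$---are genuine imprecisions in the lemma's statement that the paper leaves unaddressed (though they are harmless for the only use made of the lemma, namely $q=2$ in Theorem~\ref{error_estimate_theorem}).
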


\begin{proof}
    This is a standard result. See for example~\cite{stogner2007c1}, section 2.3.
\end{proof}

We are now ready to introduce a theorem which governs the order of accuracy for the smoothing and projection process.

\begin{theorem}[Error Estimate]
Suppose that Assumption~\ref{assumption_jump_2d} holds, $k\geq 1$, and that the edge length $|e| \approx h_{a}$ on mesh $\mathcal{T}_a$. Then, the error between the solution on the initial mesh, $v_{h_a}$, and the smoothed/transferred solution on the final mesh, $I_{h_b}(E(v_{h_a}))$, is governed by the following inequality
\begin{align*}
    \left\| v_{h_a} - I_{h_b}(E(v_{h_a})) \right\|_{L_2(\Omega)} \lesssim \mathcal{C}_{v} h_a^{k+1}  + h^{\mu}_b \Big( \mathcal{C}_{v} h_a^{k-1} + |v_{h_a}|_{H^2(\Omega ,\mathcal{T}_a)} \Big),
\end{align*}
where $\mathcal{C}_v = \mathcal{C}_{v} (v, \nabla v)$ is a constant that depends on $v$ and its gradient, and $\mu = \min(k+1, 2)$. 
\label{error_estimate_theorem}
\end{theorem}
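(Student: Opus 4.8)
The plan is to split the total error through the intermediate surrogate $E(v_{h_a})$ and bound the two resulting pieces with the two lemmas already at our disposal. By the triangle inequality,
\begin{align*}
\left\| v_{h_a} - I_{h_b}(E(v_{h_a})) \right\|_{L_2(\Omega)} \leq \left\| v_{h_a} - E(v_{h_a}) \right\|_{L_2(\Omega)} + \left\| E(v_{h_a}) - I_{h_b}(E(v_{h_a})) \right\|_{L_2(\Omega)}.
\end{align*}
The first piece measures the smoothing error and the second measures the projection (transfer) error; I would treat them separately.

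For the smoothing term, I would invoke the first estimate of Lemma~\ref{lemma_h2_2d_k1} and then substitute the jump identities of Assumption~\ref{assumption_jump_2d}. Using $|e| \approx h_a$ to pull the mesh-size factors out of each edge contribution, the two terms $|e|\,\|\llbracket v \rrbracket_e\|^2_{L_2(e)}$ and $|e|^3\,\|\llbracket \partial v/\partial n \rrbracket_e\|^2_{L_2(e)}$ both collapse to order $h_a^{2k+2}$, so that $\left\| v_{h_a} - E(v_{h_a}) \right\|_{L_2(\Omega)} \lesssim \mathcal{C}_v h_a^{k+1}$, which is exactly the first term of the claimed bound.

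For the projection term, I would set $w = E(v_{h_a})$. Since $w \in W_{\mathcal{T}_a} \subset H^2(\Omega)$, I would apply Lemma~\ref{lemma_interp_bound_2d_k1} with $q = 2$, giving $\left\| w - I_{h_b}(w) \right\|_{L_2(\Omega)} \lesssim h_b^{\mu}\,|w|_{H^2(\Omega)}$ with $\mu = \min(k+1,2)$; the cap at $2$ reflects the fact that the HCT surrogate is only guaranteed to be globally $H^2$. It then remains to bound $|E(v_{h_a})|_{H^2(\Omega)}$, which I would do by a second triangle inequality, $|E(v_{h_a})|_{H^2(\Omega,\mathcal{T}_a)} \leq |v_{h_a} - E(v_{h_a})|_{H^2(\Omega,\mathcal{T}_a)} + |v_{h_a}|_{H^2(\Omega,\mathcal{T}_a)}$, controlling the first term on the right with the second estimate of Lemma~\ref{lemma_h2_2d_k1}. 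Repeating the substitution of Assumption~\ref{assumption_jump_2d} and $|e| \approx h_a$, the factors $|e|^{-3}\,\|\llbracket v \rrbracket_e\|^2_{L_2(e)}$ and $|e|^{-1}\,\|\llbracket \partial v/\partial n \rrbracket_e\|^2_{L_2(e)}$ now collapse to order $h_a^{2k-2}$, yielding $|v_{h_a} - E(v_{h_a})|_{H^2(\Omega,\mathcal{T}_a)} \lesssim \mathcal{C}_v h_a^{k-1}$. Inserting this into the projection estimate produces the factor $h_b^{\mu}\big(\mathcal{C}_v h_a^{k-1} + |v_{h_a}|_{H^2(\Omega,\mathcal{T}_a)}\big)$, and adding the smoothing term completes the bound.

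The main obstacle I anticipate is the bookkeeping of the mesh-size powers when the edge sums are evaluated: the estimate hinges on reading the jump identities of Assumption~\ref{assumption_jump_2d} consistently inside the summations of Lemma~\ref{lemma_h2_2d_k1}, and on using $|e| \approx h_a$ uniformly, so that the $L_2$ estimate lands on $h_a^{k+1}$ while the $H^2$ estimate lands on $h_a^{k-1}$ --- a two-power gap that must be tracked carefully, since it is precisely what propagates the smoothing error through the projection bound. A secondary point worth stating explicitly is that the two separate estimates of Lemma~\ref{lemma_h2_2d_k1} play distinct roles: the $L_2$ bound controls the smoothing term directly, whereas the $H^2$ bound is needed only to estimate the seminorm of the surrogate that drives the projection error.
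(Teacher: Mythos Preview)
Your proposal is correct and follows essentially the same route as the paper: split via the triangle inequality through $E(v_{h_a})$, bound the smoothing term with the $L_2$ estimate of Lemma~\ref{lemma_h2_2d_k1}, bound the projection term with Lemma~\ref{lemma_interp_bound_2d_k1} at $q=2$, and then control $|E(v_{h_a})|_{H^2}$ by a second triangle inequality together with the $H^2$ estimate of Lemma~\ref{lemma_h2_2d_k1}, finally inserting Assumption~\ref{assumption_jump_2d} and $|e|\approx h_a$. The only cosmetic difference is that the paper writes the intermediate bounds in square-root form (powers $|e|^{1/2}$, $|e|^{3/2}$, etc.) rather than squared form before substituting the jump identities, but the bookkeeping you describe is equivalent.
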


\begin{proof}
We begin by employing the Triangle inequality as follows
\begin{align}
    \nonumber \left\| v_{h_a} - I_{h_b}(E(v_{h_a})) \right\|_{L_2(\Omega)} &= \left\| v_{h_a} - I_{h_b}(E(v_{h_a})) + E(v_{h_a}) - E(v_{h_a}) \right\|_{L_2(\Omega)} \\[1.5ex]
    &\leq \left\| v_{h_a} - E(v_{h_a}) \right\|_{L_2(\Omega)} + \left\| E(v_{h_a}) - I_{h_b}(E(v_{h_a})) \right\|_{L_2(\Omega)}.
    \label{total_eqn}
\end{align}
There are two terms on the RHS of the expression above
\begin{align}
    &\left\| v_{h_a} - E(v_{h_a}) \right\|_{L_2(\Omega)}, \label{term_one}\\[1.5ex]
    &\left\| E(v_{h_a}) - I_{h_b}(E(v_{h_a})) \right\|_{L_2(\Omega)}. \label{term_two}
\end{align}
An upper bound for the first term (Eq.~\eqref{term_one}) can be obtained using Lemma~\ref{lemma_h2_2d_k1} 
%with $h_a$ in place of $h$ as follows
\begin{align}
    \left\| v_{h_a} - E(v_{h_a}) \right\|_{L_2(\Omega)} \lesssim \sum_{e \in \mathcal{E} (\mathcal{T}_a)} \left( |e|^{1/2} \left\| \llbracket v_{h_a} \rrbracket_e \right\|_{L_2(e)} + |e|^{3/2} \left\| \llbracket \partial v_{h_a}/\partial n \rrbracket_e \right\|_{L_2(e)} \right).
    \label{term_one_exp}
\end{align}
An upper bound for the second term (Eq.~\eqref{term_two}) can be obtained using Lemma~\ref{lemma_interp_bound_2d_k1} with $q=2$. In particular, if we employ this lemma in conjunction with the Triangle inequality, and Lemma~\ref{lemma_h2_2d_k1} 
\begin{align}
    \nonumber &\left\| E(v_{h_a}) - I_{h_b}(E(v_{h_a})) \right\|_{L_2(\Omega)} \\[1.5ex]
    \nonumber &\lesssim h^{\mu}_b |E(v_{h_a})|_{H^2(\Omega)}\\[1.5ex]
    \nonumber &\lesssim h^{\mu}_b |E(v_{h_a})|_{H^2(\Omega, \mathcal{T}_a)}\\[1.5ex]
    \nonumber &\lesssim h^{\mu}_b \left( |E(v_{h_a}) - v_{h_a}|_{H^2(\Omega ,\mathcal{T}_a)} + |v_{h_a}|_{H^2(\Omega ,\mathcal{T}_a)} \right)\\[1.5ex]
    &\lesssim h^{\mu}_b \left( \sum_{e \in \mathcal{E} (\mathcal{T}_a)} \left( \frac{1}{|e|^{3/2}} \left\| \llbracket v_{h_a} \rrbracket_e \right\|_{L_2(e)} + \frac{1}{|e|^{1/2}} \left\| \llbracket \partial v_{h_a}/\partial n \rrbracket_e \right\|_{L_2(e)} \right) + |v_{h_a}|_{H^2(\Omega ,\mathcal{T}_a)} \right).
    \label{term_two_exp}
\end{align}
By substituting Eqs.~\eqref{term_one_exp} and~\eqref{term_two_exp} into Eq.~\eqref{total_eqn}, one obtains
\begin{align}
    \nonumber &\left\| v_{h_a} - I_{h_b}(E(v_{h_a})) \right\|_{L_2(\Omega)} \\[1.5ex] \nonumber &\lesssim \sum_{e \in \mathcal{E} (\mathcal{T}_a)} \left( |e|^{1/2} \left\| \llbracket v_{h_a} \rrbracket_e \right\|_{L_2(e)} + |e|^{3/2} \left\| \llbracket \partial v_{h_a}/\partial n \rrbracket_e \right\|_{L_2(e)} \right)\\[1.5ex] &+ h^{\mu}_b \left( \sum_{e \in \mathcal{E} (\mathcal{T}_a)} \left( \frac{1}{|e|^{3/2}} \left\| \llbracket v_{h_a} \rrbracket_e \right\|_{L_2(e)} + \frac{1}{|e|^{1/2}} \left\| \llbracket \partial v_{h_a}/\partial n \rrbracket_e \right\|_{L_2(e)} \right) + |v_{h_a}|_{H^2(\Omega ,\mathcal{T}_a)} \right).
    \label{total_eqn_exp}
\end{align}
Next, one may substitute Assumption~\ref{assumption_jump_2d} into Eq.~\eqref{total_eqn_exp}, and set $|e| \approx h_a$ as follows
\begin{align*}
    &\left\| v_{h_a} - I_{h_b}(E(v_{h_a})) \right\|_{L_2(\Omega)} \\[1.5ex] &\lesssim \sum_{e \in \mathcal{E} (\mathcal{T}_a)} h_a^{k+1} \left( C_{I} + C_{II} \right) + h^{\mu}_b \left( \sum_{e \in \mathcal{E} (\mathcal{T}_a)} h_a^{k-1} \left( C_{I} + C_{II} \right) + |v_{h_a}|_{H^2(\Omega ,\mathcal{T}_a)} \right).
\end{align*}
%
%One may now substitute $k=1$ into the expression above, and observe that $\mu = 2$ and $|v_{h_a}|_{H^2(\Omega ,\mathcal{T}_a)} = 0$. Upon making these simplifications, and 
Upon observing that $h_a$ is the maximum diameter of all elements in mesh~$\mathcal{T}_a$, we obtain
\begin{align*}
    &\left\| v_{h_a} - I_{h_b}(E(v_{h_a})) \right\|_{L_2(\Omega)} \\[1.5ex] &\lesssim h_a^{k+1} \sum_{e \in \mathcal{E} (\mathcal{T}_a)}  \left( C_{I} + C_{II} \right) + h^{\mu}_b \left( h_a^{k-1} \sum_{e \in \mathcal{E} (\mathcal{T}_a)}  \left( C_{I} + C_{II} \right) + |v_{h_a}|_{H^2(\Omega ,\mathcal{T}_a)} \right).
\end{align*}
This completes the proof.

% Lastly, upon assuming that $h_{a} \approx h_{b} \approx h$, we obtain the final result
% \begin{align*}
%     &\left\| v_{h} - I_{h}(E(v_{h})) \right\|_{L_2(\Omega)} \lesssim h^{2} \sum_{e \in \mathcal{E} (\mathcal{T})} \left( C_{I} + C_{II} \right).
% \end{align*}
\end{proof}

\begin{remark}
    Theorem~\ref{error_estimate_theorem} will usually underpredict the order of accuracy of the smoothing/projection process. The theorem correctly predicts 2nd-order accuracy when $k =1$; however, it also predicts that the order of accuracy will be 2 when $k = 2$ or 3. In practice, we often observe that the order of accuracy is $k + 1$ for $1 \leq k \leq 3$. This is true, even though the smoothed solution is generally not guaranteed to be in $H^{k+1}(\Omega)$ when $k\geq2$. In particular, we recall that the HCT-spline space is only a subset of $H^{2}(\Omega)$, but not of $H^{q}(\Omega)$ when $q>2$. Furthermore, the exponent of $h_b$, $\mu = \min(k+1,q)$, should not equal $k+1$, when $(k+1)>q=2$. Nevertheless, order $k+1$ convergence is often observed in practice.
\end{remark}

\section{Numerical Results}
\label{sec;Results}
\noindent In this section, we attempt to answer the following practical questions:
\begin{itemize}
    \item How does synchronization and HCT interpolation during the transfer process effect the accuracy and conservation properties of the transferred solution?
    \item How does $L_2$-projection effect the accuracy and conservation properties of the transferred solution?
    \item If a discrete maximum principle is enforced via limiting of the transferred solution, how will this impact its accuracy and conservation properties?
\end{itemize}
In order to answer these questions, we investigated four different solution-transfer methods, for polynomial orders $k = 1$ and $k = 2$. 
%with the exception of the $k = 2$ mass conservation study which does not feature a linear transfer. 
The four methods for the $k = 1$ case are given below, alongside their abbreviations:
\begin{enumerate}
    \item TRANS1 -- No HCT smoothing, $L_2$-projection, no limiting.
    \item TRANS2 -- HCT smoothing, $L_2$-projection, no limiting, ({\bf our method}).
    \item TRANS3 -- No HCT smoothing, $L_2$-projection, with limiting.
    \item LINEAR -- Linear interpolation.
\end{enumerate}
In addition, the four methods for the $k = 2$ case are as follows: 
\begin{enumerate}
    \item TRANS1 -- No HCT smoothing, $L_2$-projection, no limiting.
    \item TRANS2 -- HCT smoothing, $L_2$-projection, no limiting, ({\bf our method}). 
    \item LINEAR -- Linear interpolation.
    %(not part of the mass conservation study).
    \item QUADRATIC -- Quadratic interpolation.
\end{enumerate} 
Note that the limiting procedure mentioned above was taken from Alauzet and Mehrenberger,~\cite{alauzet2010p1}. It was only implemented for the $k = 1$ case because a $k = 2$ version has not yet been developed.

Our study of the performance of these solution-transfer methods is broken down into two major pieces:
\begin{itemize}
    \item A study of mass conservation and $L_2$ error.
    \item A study of visualization properties. 
\end{itemize}

\subsection{Mass Conservation and Order of Accuracy Studies}

In order to evaluate the performance of the solution-transfer methods, we tested their ability to transfer a solution between two different families of meshes. The \emph{source} family of meshes were composed from structured arrangements of triangular elements, and the \emph{target} family of meshes were composed from unstructured triangular elements. The properties of both mesh families are summarized in Table~\ref{tab:grid_info}. An example of these grids is shown in Figure \ref{fig:grid5}.
\begin{table}[h!]
\begin{center}
\begin{tabular}{|p{1.5cm}|p{2cm}|p{2cm}|p{1.5cm}|}
\hline
 & \multicolumn{3}{|c|}{Grid Size} \\
\hline
Grid Sequence Number& Structured Elements& Unstructured Elements& $h$\\
\hline
1& 32&	28&	3.535534\\
\hline
2 & 128&	124&	1.767767\\
\hline
3 & 512&	512&	0.883883\\
\hline
4 & 2048&	2064&	0.441942\\
\hline
5 & 8192&	8220&	0.220971\\
\hline
6 & 32768&	32964&	0.110485\\
\hline
7 & 131072&	130800&	0.055243\\
\hline
\end{tabular}
\end{center}
\caption{Properties of the structured and unstructured mesh families developed for the mass conservation and order of accuracy studies.}
\label{tab:grid_info}
\end{table}

In each numerical test, we started by constructing a discontinuous function on the source mesh. The purpose of this function was to mimic the numerical solution of a discontinuous finite element method. This function was produced via an $L_2$-projection of the `exact solution' ---which was itself a carefully chosen, continuous transcendental function. The resulting, discontinuous function on the source mesh is referred to as $u$. %The structured grid is treated as the source grid, and the unstructured grid is treated as the target grid. 
The transferred solution on the target mesh is referred to as $g$. 

For the mass conservation study, the mass variation (difference in mass) between the source and target grids is computed as follows
\begin{align}
    mv = \biggr|\int_{\Omega} u \hspace{1mm}d\Omega- \int_{\Omega} g \hspace{1mm}d\Omega \biggr|.
    \label{eqn:mass_variation}
\end{align}
Note that the integration is carried out element-wise using a 15-point quadrature rule capable of exactly integrating a polynomial of degree 7,~\cite{williams2014symmetric}.

For the order of accuracy study, the $L_2$ error is computed as follows
\begin{align}
     E_{L_2}= \sqrt{\int_{\Omega}\left(u-g\right)^2}.
     \label{eqn:L2_error}
\end{align}
The integral in this expression was computed over the whole domain, using the tensor product of two, 40-point Gauss quadrature rules. This yielded a sufficiently accurate approximation to the integral based on the error values at 1600 points. 

\begin{figure}[h!]
\centering
    \begin{subfigure}{0.5\textwidth}
        \centering
        \includegraphics[width=0.95\textwidth]{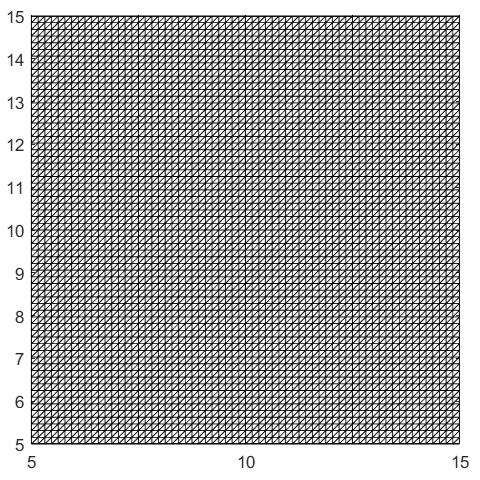}
    \end{subfigure}%
    \hfill
    \begin{subfigure}{0.5\textwidth}
        \centering
        \includegraphics[width=0.95\textwidth]{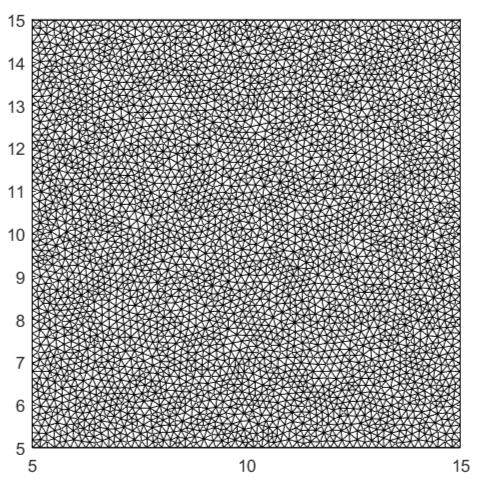}
    \end{subfigure}
    \caption{Left, structured triangular mesh with 8192 elements. Right, unstructured triangular mesh with 8220 elements. These meshes correspond to row 5 of Table~\ref{tab:grid_info}.}
    \label{fig:grid5}
\end{figure}

Three functions were selected for the study. The first, a Gaussian function
\begin{align}
    u_1 = \exp{\left(-1.5\left((x-10)^2+(y-10)^2\right)\right)}, \hspace{2mm} x,y \in [5,15]
\end{align}
was taken from Alauzet and Mehrenberger~\cite{alauzet2010p1} and was shifted to avoid any convenient cancellation from being centered about the origin. The second function
\begin{align}
    u_2 = \tanh{\left(100(y+0.3\sin{(-2x)})\right)}, \hspace{2mm} x,y \in [-1,1]
    \label{f2_def}
\end{align}
is again taken from Alauzet and Mehrenberger~\cite{alauzet2010p1} and was not shifted or changed in any way. The third function
\begin{align}
    u_3 = 12\exp{\left(-0.3\left((x-10)^2+(y-10)^2\right)\right)+\sin{(2x)}\sin{(2y)}}, \hspace{2mm} x,y \in [5,15]
\end{align}
is a return to the Gaussian function, however the Gaussian feature was made significantly larger, and smaller-scale sinusoidal wave functions were added to make the function more multi-scale in nature. The functions $u_1$, $u_2$, and $u_3$ can be seen in Figures \ref{fig:f1_L2_error}, \ref{fig:f2_L2_error}, and \ref{fig:f3_L2_error}, respectively. 

\subsubsection{Mass Conservation} \label{mass_conservation_tests}

As was mentioned in Section~\ref{sec;Implementation}, choosing the appropriate quadrature rule and refinement strategy can make a significant difference in the level of accuracy and conservation of the transferred solution. For this reason, a mass conservation study focusing on determining an appropriate quadrature-refinement strategy was carried out. This study used the first function $u_1$, polynomials of degree $k=1$, and the transfer method TRANS1 (No HCT smoothing, $L_2$-projection, no limiting). For this method, the $L_2$-projection was evaluated in conjunction with the following quadrature rules on the grids in Table~\ref{tab:grid_info}:
\begin{itemize}
    \item 15-point rule, no refinement (15 points)
    \item 15-point rule, 1 refinement (60 points)
    \item 3-point rule, no refinement (3 points)
    \item 3-point rule, 1 refinement (12 points)
    \item 3-point rule, 2 refinements (48 points)
    \item 3-point rule, 3 refinements (192 points)
    \item 6-point rule, 1 refinement (24 points)
    \item 6-point rule, 2 refinements (96 points)
\end{itemize}
Here, each `$h$-refinement' consisted of  subdividing a triangle into four subtriangles of equal size. For example, `2 refinements' involves subdividing the initial triangle into four subtriangles, and then subdividing each of these subtriangles into four subtriangles, for a total of 16 triangles.

The results of this study are shown in Figure~\ref{fig:mass_results}. The exact numerical values for this study are summarized in Table~\ref{tab:quad_study} of the Appendix.
\begin{figure}[h!]
    \centering
    \includegraphics[width=0.9\linewidth]{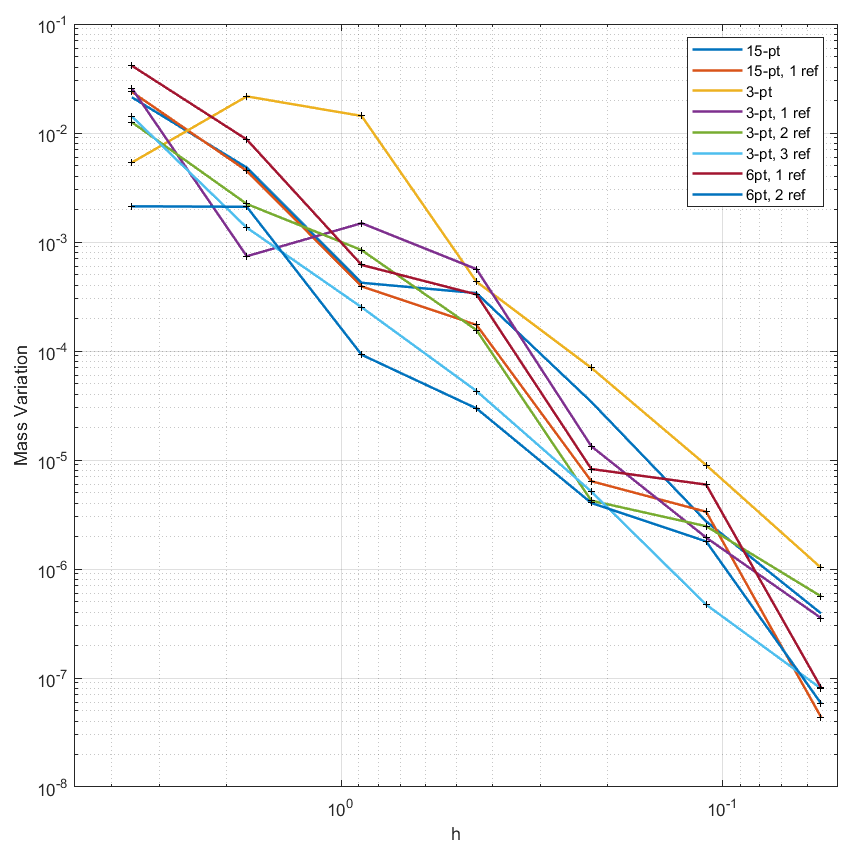}
    \caption{A study of mass conservation using the TRANS1 method on the function $u_1$. This study compares various quadrature rules to determine the most conservative rule and refinement strategy.}
    \label{fig:mass_results}
\end{figure}
The most accurate method on the finest grid was a 15-point rule with one refinement. Note that blindly making $h$-refinements is not necessarily guaranteed to provide optimal results. This is best exemplified by the 3-point rule which sees improvement in mass conservation by nearly a whole order of magnitude on the finest grid, when comparing no refinement to one refinement. Yet, a second $h$-refinement performs worse on the finest grid than one $h$-refinement, and a third $h$-refinement performs better. The effectiveness of the $h$-refinements varies (in part) due to coincidental alignment of the subdivisions with the discontinuous solution. This would suggest that using a nested quadrature rule with an error indicator to intelligently guide the refinement would yield the best results. However, because such a tool has not yet been developed (as mentioned previously), the 15-point rule with one refinement was used for the remainder of the experiments in this study. 

Now, having selected an adequate quadrature strategy, we tested the various transfer methods to determine their conservation properties.
\begin{figure}[h!]
\centering
    \begin{subfigure}{0.65\textwidth}
        \centering
        \includegraphics[width=1.0\textwidth]{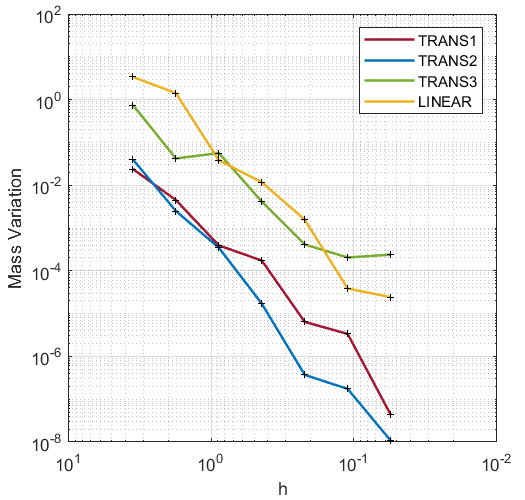}
    \end{subfigure}%
    \begin{subfigure}{0.65\textwidth}
        \centering
        \includegraphics[width=1.0\textwidth]{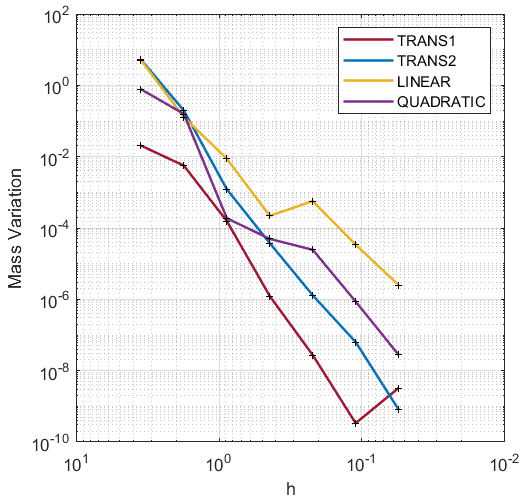}
    \end{subfigure}
    \caption{Left, mass variation of the solution transfer from structured-to-unstructured meshes for $k =1$. Right, mass variation for the $k=2$ case.}
     \label{fig:f1_mass}
\end{figure}
Figure \ref{fig:f1_mass} and Table~\ref{tab:mass_study} of the Appendix show the results for both $k = 1$ and $k = 2$ polynomial orders. The methods TRANS1 and TRANS2 perform the best overall. The $k = 2$ versions of these same transfer methods see significant improvement in conservation compared to the $k = 1$ case.
%, with TRANS1 likely experiencing round-off errors on the last grid.
%QUADRATIC is comparable to TRANS2 in the $k = 1$ study. 
While TRANS3 is guaranteed to satisfy the discrete maximum principle, it suffers from a conservation standpoint. Similarly, LINEAR, which is the current default standard of solution transfer, did not achieve mass errors better than $10^{-6}$ in our study.

\subsubsection{Order of Accuracy}
\label{sec;Order of Accuracy}
We evaluated the $L_2$ error for all three functions ($u_1, u_2, u_3$), and all solution-transfer methods.
Figure \ref{fig:f1_L2_error} shows the results for the first function and $k =1$ polynomials.  One may observe that all methods, TRANS1, TRANS2, TRANS3, and LINEAR converge  at a rate of second order, with the TRANS1 and TRANS2 methods returning the lowest $L_2$ error on the finest grid. We note that the TRANS1  method slightly outperforms the other methods in terms of accuracy. 
\begin{figure}[h!]
\centering
    \begin{subfigure}{1.0\textwidth}
        \centering
        \includegraphics[width=1.0\textwidth]{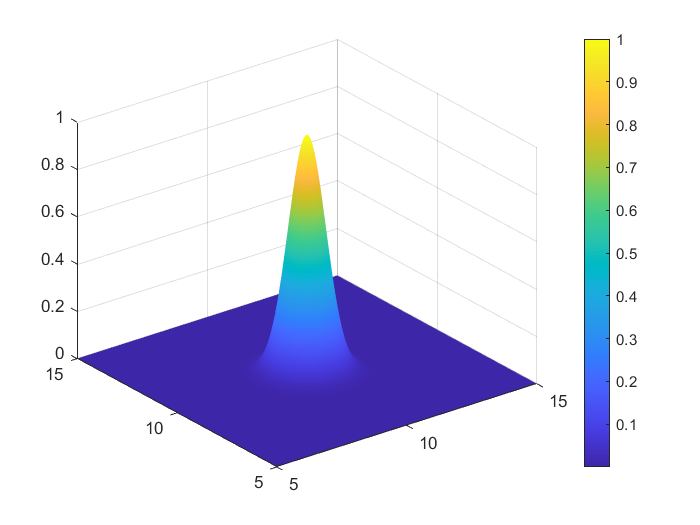}
    \end{subfigure}
    \begin{subfigure}{0.65\textwidth}
        \centering
        \includegraphics[width=1.0\textwidth]{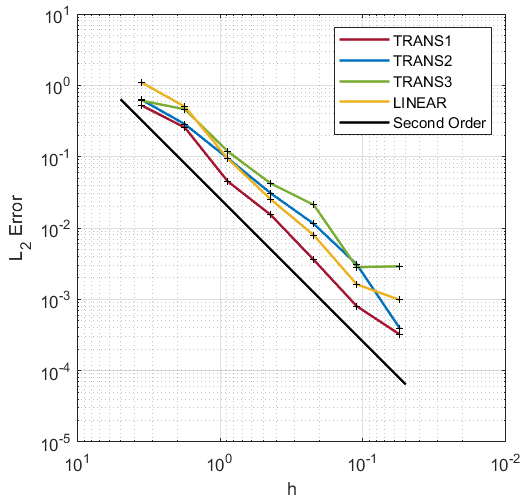}
    \end{subfigure}%
    \begin{subfigure}{0.65\textwidth}
        \centering
        \includegraphics[width=1.0\textwidth]{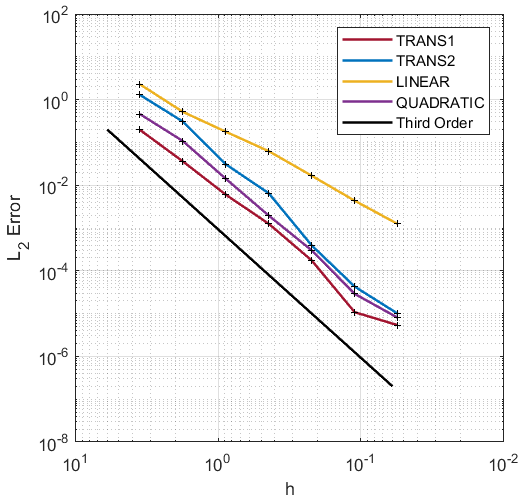}
    \end{subfigure}
    \caption{Top, the function $u_1$. Bottom left, $L_2$ error of the solution transfer from structured-to-unstructured meshes, for the $k = 1$ case. Bottom right, $L_2$ error for the $k = 2$ case.}
    \label{fig:f1_L2_error}
\end{figure}

Figure \ref{fig:f1_L2_error} also shows the results for the $k = 2$ case. Here, the TRANS1, TRANS2, and QUADRATIC methods all converge at a rate of third order. These three methods produce almost identical errors on the finest grid. However, the conventional method (the LINEAR method) still converges at a rate of second order, and produces significantly more error than the other three methods.

Next, we consider the second function. The results for this function are effected by the sharp gradients of the hyperbolic tangent in Eq.~\eqref{f2_def}. Figure~\ref{fig:f2_L2_error} shows the results for the $k = 1$ case. We observe that all methods (TRANS1, TRANS2, TRANS3, and LINEAR) converge at a first order rate until the sixth and seventh grids, where we begin to observe second order accuracy. This shift in the order of accuracy occurs because the sharp gradients of the function become fully resolved on these later grids. The results on the finest grids are consistent with those reported by Alauzet and Mehrenberger~\cite{alauzet2010p1} for the TRANS1, TRANS3, and LINEAR methods. In addition, we observe that the TRANS1 and TRANS2 methods produce similar levels of error, although TRANS1 produces slightly less.
\begin{figure}[h!]
\centering
    \begin{subfigure}{1.0\textwidth}
        \centering
        \includegraphics[width=1.0\textwidth]{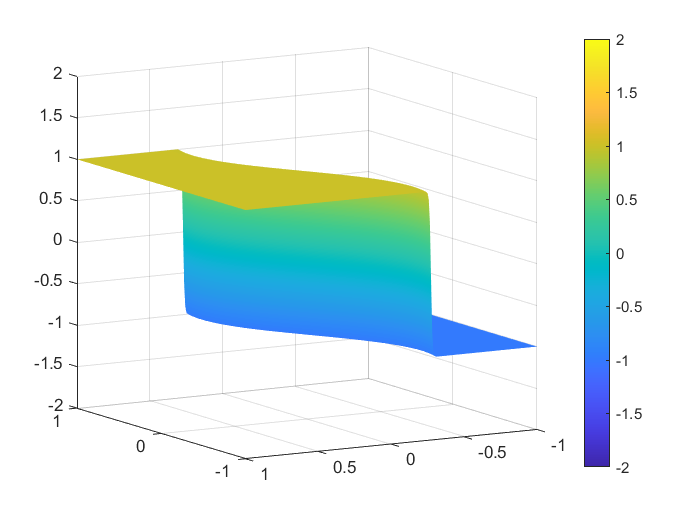}
    \end{subfigure}
    \begin{subfigure}{0.65\textwidth}
        \centering
        \includegraphics[width=1.0\textwidth]{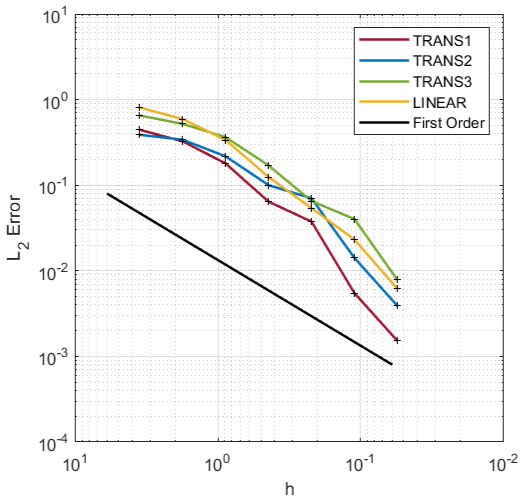}
    \end{subfigure}%
    \begin{subfigure}{0.65\textwidth}
        \centering
        \includegraphics[width=1.0\textwidth]{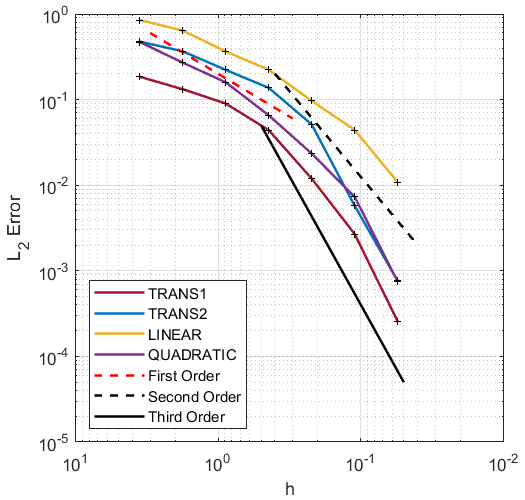}
    \end{subfigure}
    \caption{Top, the function $u_2$. Bottom left, $L_2$ error of the solution transfer from structured-to-unstructured meshes, for the $k = 1$ case. Bottom right, $L_2$ error for the $k = 2$ case.}
    \label{fig:f2_L2_error}
\end{figure}

Figure~\ref{fig:f2_L2_error} also shows the results for the $k = 2$ case. Here, the TRANS1, TRANS2, and QUADRATIC methods appear to rapidly shift from first order to second order accuracy, and eventually end at third order accuracy on the finest grids. Because of this trend, we expect to recover third order accuracy on any progressively finer grids.
Across the grids, the TRANS2 and QUADRATIC methods produce similar levels of error, and TRANS1 produces slightly less error than both methods on the finest grids.
Conversely, the LINEAR method only achieves second order accuracy on the finest grids. This is expected to be the upper limit of accuracy for the LINEAR method, regardless of the data or the grids it operates on.

Finally, we consider the third function. The results for this function are very similar to what we observed for $u_1$. Figure~\ref{fig:f3_L2_error} shows that all methods achieve second order accuracy for the $k =1$ case. In addition, for the $k = 2$ case, we observe third order accuracy for all methods except LINEAR, which once again achieves second order accuracy. 
\begin{figure}[h!]
\centering
    \begin{subfigure}{1.0\textwidth}
        \centering
        \includegraphics[width=1.0\textwidth]{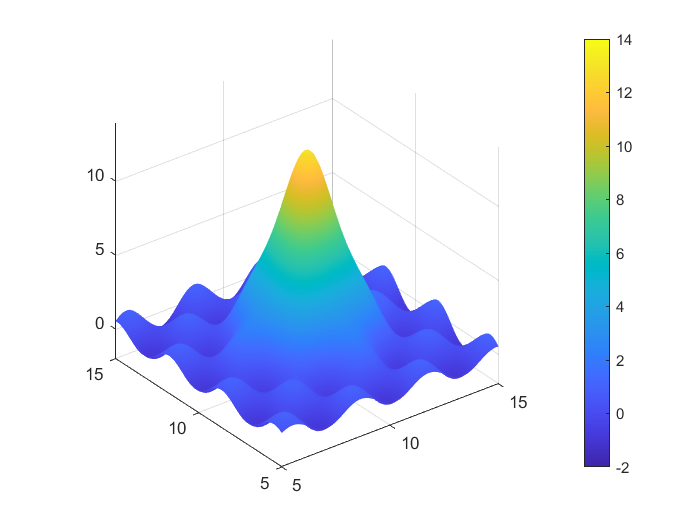}
    \end{subfigure}
    \begin{subfigure}{0.65\textwidth}
        \centering
        \includegraphics[width=1.0\textwidth]{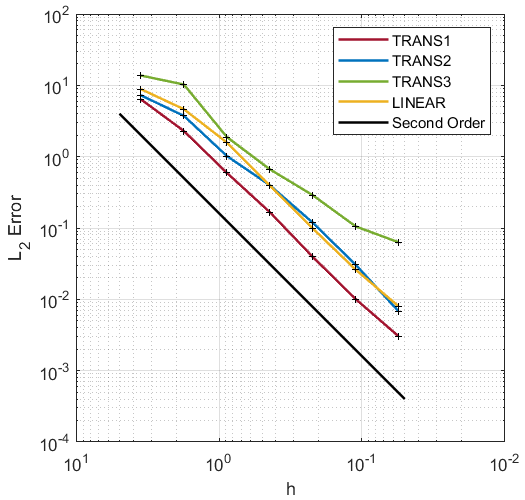}
    \end{subfigure}%
    \begin{subfigure}{0.65\textwidth}
        \centering
        \includegraphics[width=1.0\textwidth]{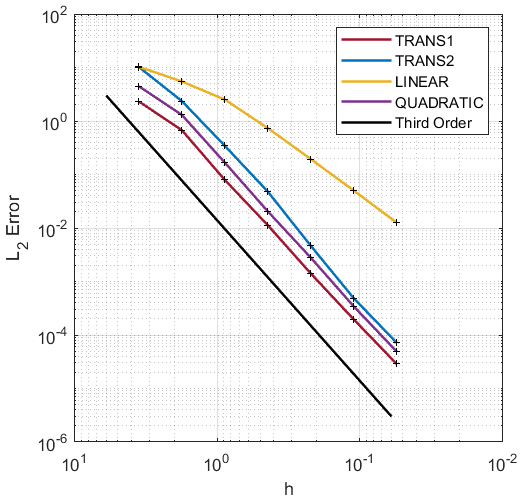}
    \end{subfigure}
    \caption{Top, the function $u_3$. Bottom left, $L_2$ error of the solution transfer from structured-to-unstructured meshes, for the $k = 1$ case. Bottom right, $L_2$ error for the $k = 2$ case.}
    \label{fig:f3_L2_error}
\end{figure}

In summary, we obtained the expected order of accuracy for each of the solution-transfer methods we tested. One may recall that Section~\ref{sec;Theoretical Results}, Theorem~\ref{error_estimate_theorem}, states that the error of the TRANS2 method is bounded by $h_a^{k+1}$ and $h_{b}^{\mu}$ where $\mu = \min(k+1,2)$. Therefore, for the $k = 1$ cases, we expected to see a convergence rate of second order, and for the $k = 2$ cases, we expected to see a minimum of second order; but possibly third order depending on the value of $\mu$. In practice, for the $k=2$ cases, we always observed third order accuracy, for sufficiently fine grids. The results of the $L_2$ error studies are reassuring, as they reinforce these theoretical results. In addition, they demonstrate that the error of the TRANS2 method is similar to that of the TRANS1 method. This indicates that the HCT-smoothing process does not have a significant impact on the accuracy of the transferred solution.

\subsection{Visualization}

The previous section demonstrates that the HCT-smoothing process---our proposed combination of synchronization and HCT interpolation---does not have a negative impact on the solution-transfer process. In particular, the mass conservation and order of accuracy studies reveal that the methods which involve smoothing or omit smoothing (i.e. TRANS2 and TRANS1, respectively) perform similarly, from a quantitative standpoint.
%and the main difference between these methods is the use of HCT-smoothing for TRANS2. 
This begs the question: what is the \emph{qualitative} impact of the HCT-smoothing process on the solution? In what follows, we will provide evidence that the HCT-smoothing step significantly enhances the visualization properties of the solution. Towards this end, we will compare the discontinuous representations of the functions $u_2$ and $u_3$ to the HCT-smoothed versions of the same functions. Our comparisons of the smoothed and un-smoothed data are performed on an unstructured triangular grid; in particular, the fifth unstructured grid in the target mesh sequence from the previous section. The initial discontinuous data is generated via $L_2$-projection of the functions $u_2$ and $u_3$ on to the unstructured grid. For the smoothing approach, the discontinuous data is synchronized on the unstructured grid and then interpolated at the vertices and midpoints of each element using HCT cubic polynomials.

While the HCT space is $\mathcal{C}^1$-continuous, the visualization tool we used in this study, MATLAB's \textit{trisurf}, produces a piecewise linear, contour surface. In order to represent the cubic data provided by the HCT interpolation on each element, a subdivision was made at the edge midpoints to produce four subelements from each unstructured element. This allowed us to obtain a more accurate visualization of the HCT surrogate solution. %This produces a quadratic work-around for the \textit{trisurf} tool used for the visualization of the HCT surrogate solution. 

%For each function, we visualize the function itself and the magnitude of the function's gradient.

%The visualization study introduces another advantage of using HCT over linear transfer. This advantage comes from HCT's interpolation of the solution using 3rd order polynomials while retaining the derivative information during the transfer. 
%For this study, the solution and the corresponding magnitude of the derivative are visualized on the fifth unstructured grid. 

%Later, results are shown for $u_2$ in which HCT is improved by increasing the resolution of the grid and increasing the polynomial order of the initial solution from $k = 1$ to $k = 2$.

The results for $u_2$ and the $k = 1$ case are shown in Figure \ref{fig:f2_viz_no_grid}. Here, the top row shows the exact representation of $u_2$ and its gradient magnitude, the middle row shows the discontinuous ($L_2$-projected) data and the associated gradient magnitude, and the bottom row shows the HCT-smoothed data and the associated gradient magnitude. 
%For the sake of completeness, Figure~\ref{fig:f2_viz} shows the data with the unstructured gridlines visible, while Figure~\ref{fig:f2_viz_no_grid} shows these same results with the gridlines invisible. 
The first column of Figure~\ref{fig:f2_viz_no_grid} shows how the HCT-smoothed solution (bottom left in the figure) appears to be less oscillatory, with far less extreme overshoots around the edge of the sharp feature of $u_2$, relative to the un-smoothed visualization in the row above. This difference becomes even more stark when comparing the magnitude of the gradient (second column in the figure). The un-smoothed derivative information is discontinuous, piecewise-constant data, and is very hard to visualize, whereas the HCT-smoothed data mirrors the exact representation of the gradient magnitude.  

\begin{figure}[h!]
\centering
    \begin{subfigure}{0.5\textwidth}
        \centering
        \includegraphics[width=1.0\textwidth]{F2_grid5_Exact_sol.png}
    \end{subfigure}%
    \begin{subfigure}{0.5\textwidth}
        \centering
        \includegraphics[width=1.0\textwidth]{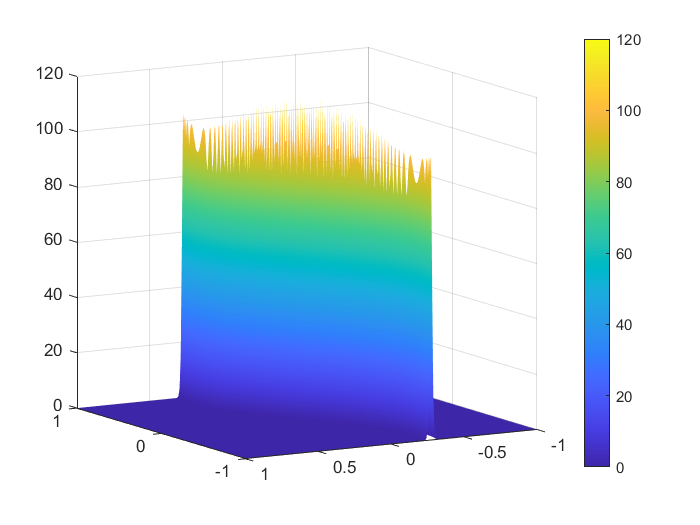}
    \end{subfigure}
    \begin{subfigure}{0.5\textwidth}
        \centering
        \includegraphics[width=1.0\textwidth]{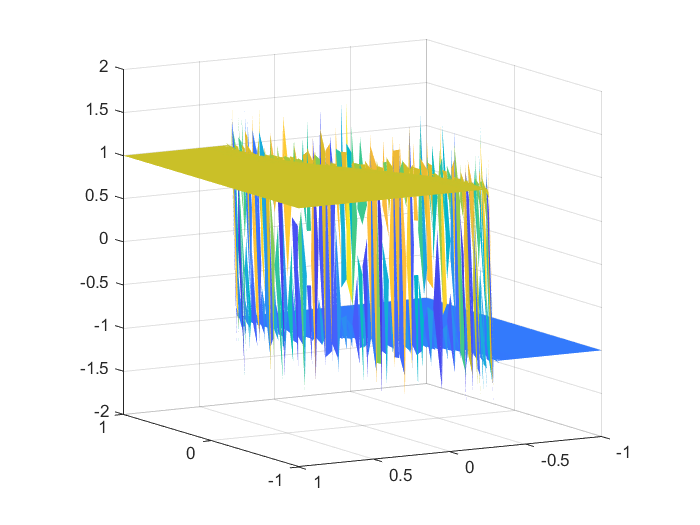}
    \end{subfigure}%
    \begin{subfigure}{0.5\textwidth}
        \centering
        \includegraphics[width=1.0\textwidth]{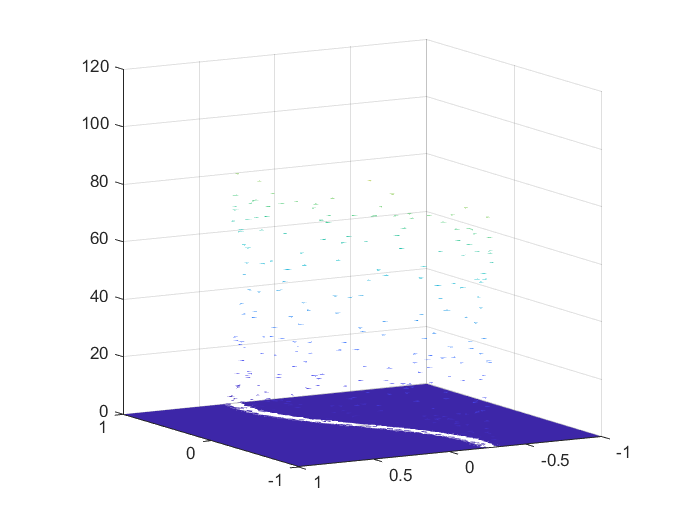}
    \end{subfigure}
    \begin{subfigure}{0.5\textwidth}
        \centering
        \includegraphics[width=1.0\textwidth]{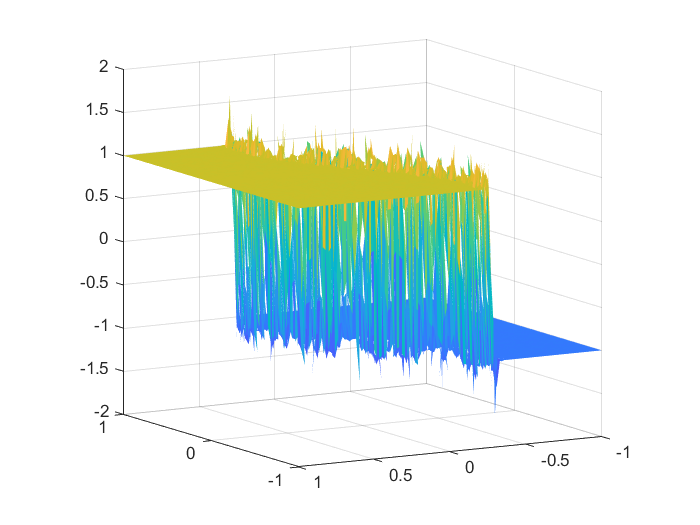}
    \end{subfigure}%
    \begin{subfigure}{0.5\textwidth}
        \centering
        \includegraphics[width=1.0\textwidth]{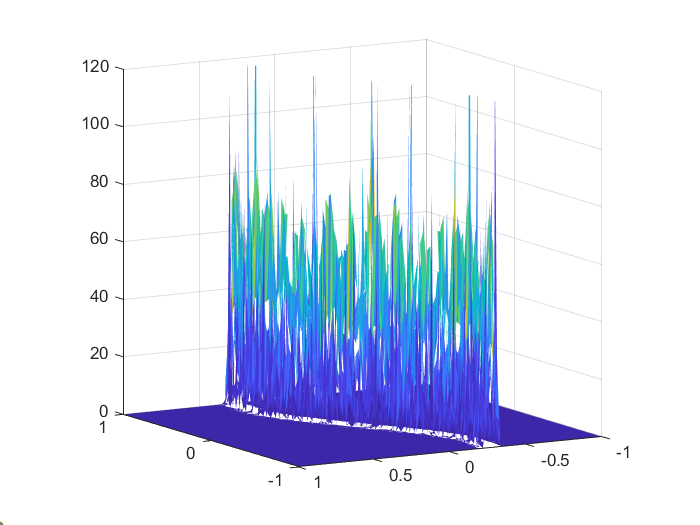}
    \end{subfigure}
    \caption{Top left, the exact representation of the function $u_2$. Top right, the magnitude of the gradient of $u_2$. Middle left, the $L_2$-projection of $u_2$. Middle right, the magnitude of the gradient of the $L_2$-projection. Bottom left, the HCT-smoothed $L_2$-projection. Bottom right, the magnitude of the gradient of the HCT-smoothed $L_2$-projection. Note: gridlines are omitted.}
    \label{fig:f2_viz_no_grid}
\end{figure}

\begin{figure}[h!]
\centering
    \begin{subfigure}{0.5\textwidth}
        \centering
        \includegraphics[width=1.0\textwidth]{F3_grid5_Exact_sol.png}
    \end{subfigure}%
    \begin{subfigure}{0.5\textwidth}
        \centering
        \includegraphics[width=1.0\textwidth]{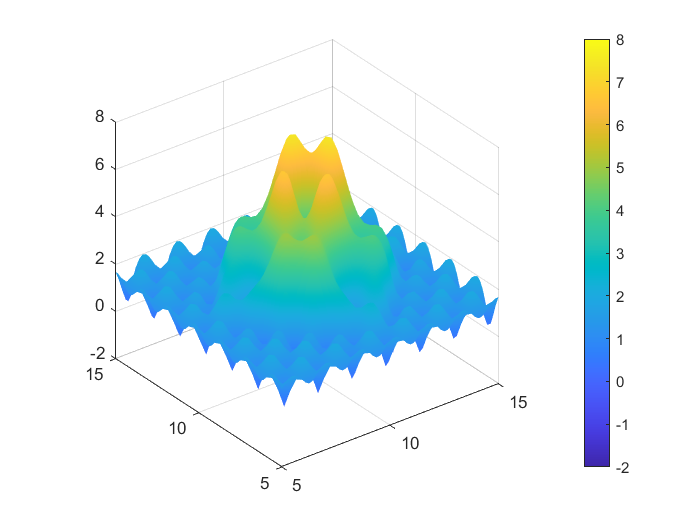}
    \end{subfigure}
    \begin{subfigure}{0.5\textwidth}
        \centering
        \includegraphics[width=1.0\textwidth]{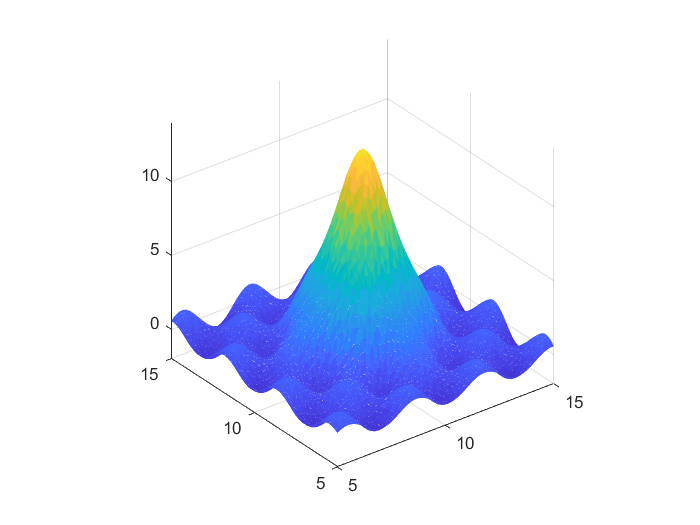}
    \end{subfigure}%
    \begin{subfigure}{0.5\textwidth}
        \centering
        \includegraphics[width=1.0\textwidth]{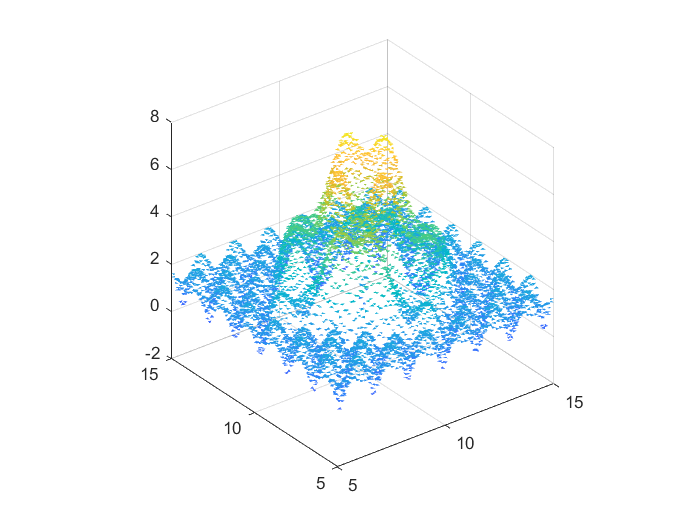}
    \end{subfigure}
    \begin{subfigure}{0.5\textwidth}
        \centering
        \includegraphics[width=1.0\textwidth]{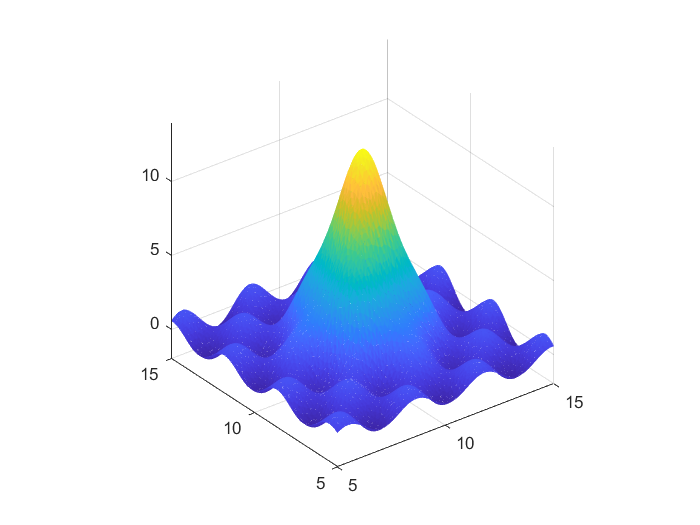}
    \end{subfigure}%
    \begin{subfigure}{0.5\textwidth}
        \centering
        \includegraphics[width=1.0\textwidth]{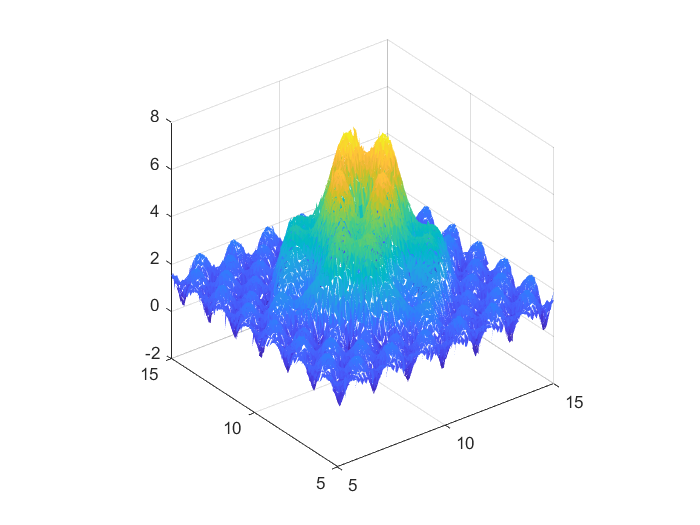}
    \end{subfigure}
    \caption{Top left, the exact representation of the function $u_3$. Top right, the magnitude of the gradient of $u_3$. Middle left, the $L_2$-projection of $u_3$. Middle right, the magnitude of the gradient of the $L_2$-projection. Bottom left, the HCT-smoothed $L_2$-projection. Bottom right, the magnitude of the gradient of the HCT-smoothed $L_2$-projection. Note: gridlines are omitted.}
    \label{fig:f3_viz_no_grid}
\end{figure}

The results for $u_3$ and the $k = 1$ case are shown in Figure~\ref{fig:f3_viz_no_grid}. In this case, the HCT-smoothed results are still better than the un-smoothed results, but the difference is less pronounced. This is due to the fact that $u_3$ is a smoother function than $u_2$. Nevertheless, the visualization of the gradient magnitude is still noticeably superior for the HCT-smoothed results. 

We also evaluated the HCT-smoothing method for different values of $k$ and grid resolutions, for the function $u_2$.
%Improving the visualization, and the solution in general, is possible as previously discussed. 
The top of Figure~\ref{fig:HCT_u2_ref} shows how $k = 2$ polynomials on the fifth unstructured grid can improve the quality of the solution and the gradient magnitude for the HCT-smoothed results. Even more impressive results are obtained in the presence of mesh refinement. This is shown in the bottom row of Figure~\ref{fig:HCT_u2_ref} for $k =1$ polynomials, and the seventh unstructured grid. 

%The magnitude of the derivative is still discontinuous and harder to visualize than that of TRANS2, but the solutions returned from both transfer methods are very similar to the exact solution shown in the top left of Figures \ref{fig:f3_viz_no_grid} and \ref{fig:f3_viz_no_grid}.

\begin{figure}[h!]
\centering
    \begin{subfigure}{0.5\textwidth}
        \centering
        \includegraphics[width=1.0\textwidth]{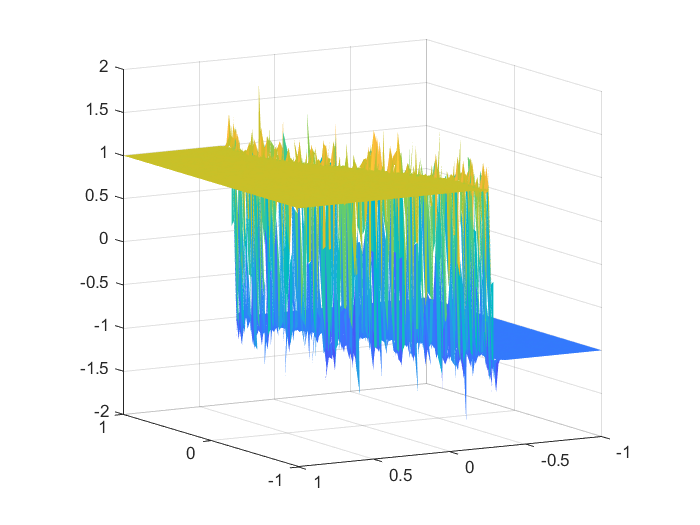}
    \end{subfigure}%
    \begin{subfigure}{0.5\textwidth}
        \centering
        \includegraphics[width=1.0\textwidth]{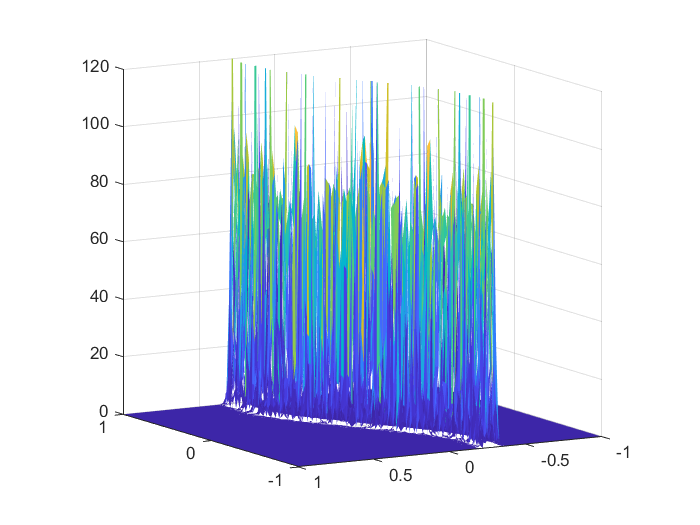}
    \end{subfigure}
    \begin{subfigure}{0.5\textwidth}
        \centering
        \includegraphics[width=1.0\textwidth]{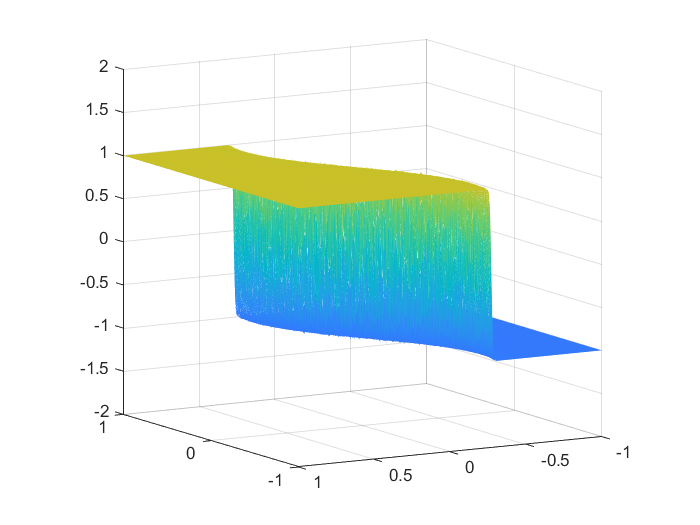}
    \end{subfigure}%
    \begin{subfigure}{0.5\textwidth}
        \centering
        \includegraphics[width=1.0\textwidth]{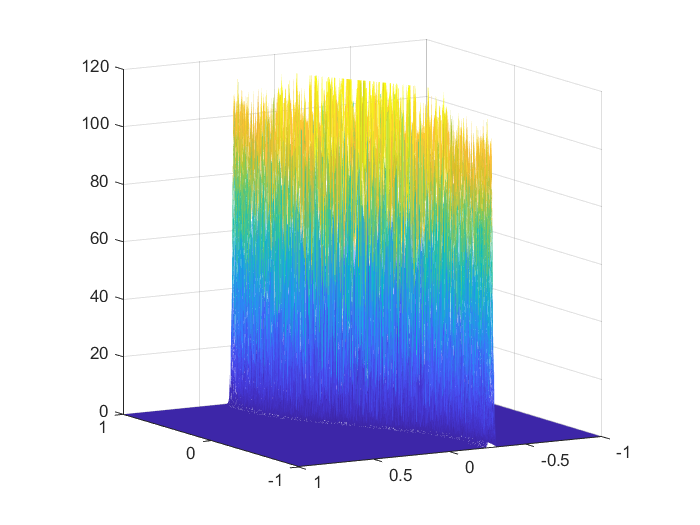}
    \end{subfigure}
    \caption{Top left, the HCT-smoothed $L_2$-projection of $u_2$ on unstructured grid 5 with $k = 2$. Top right, the magnitude of the gradient for the HCT-smoothed $L_2$-projection. Bottom left, the HCT-smoothed $L_2$-projection of $u_2$ on unstructured grid 7 with $k=1$. Bottom right, the magnitude of the gradient for the HCT-smoothed $L_2$-projection. Note: gridlines are omitted.}
    \label{fig:HCT_u2_ref}  
\end{figure}

\pagebreak
\clearpage

\section{Conclusion}
\label{sec;Conclusion}
In this work, we have presented a solution-transfer method for slab-based space-time finite element methods which uses HCT splines. This solution-transfer method was motivated by recent developments in space-time mesh adaptation which have made solution transfer between time-slabs non-trivial. The HCT solution-transfer method satisfies $\mathcal{P}_k$ exactness for $k \leq 3$; it is capable of maintaining the discrete maximum principle; it is capable of enforcing conservation (asymptotically); and it provides a smooth, continuous surrogate solution. These features make the proposed method a reliable method for many solution-transfer applications. However, it is particularly well-suited (by design) for space-time problems because it facilitates the enforcement of space-time boundary conditions \emph{and} more convenient solution visualization. 
%The HCT solution-transfer implementation is broken into three distinct steps. First, the solution and its first derivatives are synchronized on the source mesh. Second, the synchronized solution is smoothed/interpolated on the source mesh using HCT elements. Third, using $L_2$-projection, the smoothed/interpolated solution is transferred from the source mesh to the target mesh. 
In what follows, we briefly review the key contributions of the current paper. 

We derived an error bound for the smoothing and interpolation procedure (steps 1 and 2 of the transfer process) which predicts an order of accuracy of 2 when $1 \leq k \leq 3$.

Numerical experiments were conducted to test the conservation, accuracy, and visualization properties of the HCT solution-transfer method. 
%These experiments featured four transfer methods for the $k = 1$ and four for the $k = 2$ case. These methods were, the HCT solution-transfer, linear transfer, quadrature sampling transfer (the last step of the HCT procedure), quadrature sampling with limiting on the solution (only for $k=1$), and quadratic transfer (only for $k=2$). linear transfer is included in both sets of transfer methods to provide a baseline for which we compared the performance of the HCT solution-transfer method to. The experiments also used three functions: a Gaussian, a hyperbolic tangent, and a large Gaussian with a small scale wave features. Two families of structured and unstructured simplical meshes were produced for these experiments. 
The results of the conservation experiments showed that for $k=1$ and $k=2$, HCT solution-transfer is conservative (to within approximately $10^{-8}$ to $10^{-10}$) on fine grids. In addition, experiments suggest that with a sufficiently strong quadrature rule, HCT can be conservative on coarse grids as well. In addition, quadrature rules with various strengths and refinement levels were isolated and tested in this paper to illustrate how the conservation properties of the transfer method are dependent on the properties of the quadrature rules. 

The order of accuracy experiments showed that the HCT transfer method converged at second order accuracy for $k=1$ and third order accuracy for $k=2$, for all three transcendental functions.
%i.e.~the HCT-transfer order of accuracy is $k + 1$ for $1 \leq k \leq 3$. 
These results showed that the theoretical error bound accurately predicts the order of accuracy when $k=1$, while it may  underpredict the order of accuracy for larger values of $k$.
%The accuracy of the HCT solution-transfer method was tested on all three functions and compared against the three other transfer methods for $k=1$ and $k=2$. For the two Gaussian functions, the HCT and quadrature sampling transfer methods converge at second order for $k=1$ and third order for $k=2$. These results show that the error bound derived in this paper accurately predicts the order of accuracy for when $k=1$, however in practice, we observe that the order of accuracy is $k + 1$ for $1 \leq k \leq 3$. For the second function, this trend was eventually realized, however, because of the sharp gradient of the hyperbolic tangent, the feature was not resolved until the finest grids were reached. As a result, the HCT and quadrature sampling solution-transfer methods were delayed in reaching second and third order convergence for $k=1$ and $k=2$, respectively. 

The visualization study qualitatively showed the improved ability of the smooth HCT surrogate solution to visualize smooth derivatives compared to the discrete visualization provided by $L_2$-projection. Additionally, for solutions with sharp gradients, HCT smoothing/interpolation outperforms $L_2$-projection by providing a solution with smaller oscillations around sharp features.

Future work includes extending our spline-based transfer approach to higher dimensions, and developing an adaptive-quadrature routine which is fully conservative, to within machine precision. 

\pagebreak
\clearpage

\section*{CRediT authorship contribution statement}

\textbf{L. Larose}: Writing – original draft, Writing – review and editing, Visualization, Methodology, Formal
analysis. \textbf{J.T. Anderson}: Conceptualization, Formal
analysis. \textbf{D.M. Williams}: Writing – original draft, Writing – review and editing, Conceptualization, Formal
analysis, Supervision, Funding acquisition.

\section*{Software Availability}

The solution-transfer algorithms were implemented as an extension of the JENRE$^{\text{\textregistered}}$  Multiphysics Framework~\cite{Cor19_SCITECH}, which is United States government-owned software developed by the Naval Research Laboratory with other collaborating institutions. This software is not available for public use or dissemination.

\section*{Declaration of Competing Interests}

The authors declare that they have no known competing financial interests or personal relationships that could have appeared to influence the work reported in this paper.

\section*{Funding}

This research received funding from the United States Naval Research Laboratory (NRL) under grant number N00173-22-2-C008. In turn, the NRL grant itself was funded by Steven Martens, Program Officer for the Power, Propulsion and Thermal Management Program, Code 35, in the United States Office of Naval Research.

%\pagebreak

\appendix

\section{Mass Conservation Results}
\label{sec;Appendix}

\begin{landscape}
\centering
\begin{table}
    \begin{tabular}{|p{1.75cm}|p{1.75cm}|p{1.75cm}|p{1.75cm}|p{1.75cm}|p{1.75cm}|p{1.75cm}|p{1.75cm}|p{1cm}|}
    \hline
    \multicolumn{8}{|c|}{Conservation Error}&
    \multicolumn{1}{|c|}{Grid Size} \\
    \hline
    3 Point Rule No Refinement& 3 Point Rule 1 Refinement & 3 Point Rule 2 Refinements & 3 Point Rule 3 Refinements & 6 Point Rule 1 Refinement& 6 Point Rule 2 Refinements & 15 Point Rule No Refinement& 15 Point Rule 1 Refinement & $h$\\
    \hline
    5.3645E-03& 2.5605E-02& 1.2526E-02& 1.4160E-02&	4.1365E-02&	2.1254E-03&2.1237E-02& 2.3790E-02& 3.5355\\
    \hline
    2.1678E-02&	7.4051E-04&	2.2322E-03&	1.3544E-03&	8.7070E-03&	2.1038E-03& 4.8127E-03&	4.4905E-03&	1.7678\\
    \hline
    1.4324E-02&	1.4883E-03&	8.4237E-04&	2.5164E-04&	6.1587E-04&	9.2341E-05& 4.2259E-04&	3.9149E-04& 0.8839\\
    \hline
    4.3249E-04&	5.6351E-04&	1.5551E-04&	4.2937E-05&	3.3023E-04&	2.9746E-05& 3.3970E-04&	1.7353E-04&	0.4419\\
    \hline
    7.0019E-05&	1.3369E-05&	4.2376E-06&	5.1364E-06&	8.2661E-06&	4.0359E-06& 3.4119E-05&	6.3893E-06&	0.2210\\
    \hline
    8.9699E-06&	1.9417E-06&	2.4684E-06&	4.7072E-07&	5.9463E-06&	1.7891E-06& 2.7322E-06&	3.3483E-06&	0.1105\\
    \hline
    1.0265E-06&	3.5504E-07&	5.6141E-07&	8.0003E-08& 8.1899E-08&	5.8543E-08& 3.9180E-07&	4.3819E-08&	0.0552\\
    \hline
    \end{tabular}
    \caption{Mass conservation errors for different quadrature and refinement combinations, for the function $u_1$, polynomial order $k=1$, and the transfer method TRANS1.}
    \label{tab:quad_study}
\end{table}
\vspace{3cm}

\begin{table}
    \begin{tabular}{|p{1.8cm}|p{1.8cm}|p{1.8cm}|p{1.8cm}|p{1.8cm}|p{1.8cm}|p{1.8cm}|p{2.25cm}|p{1.75cm}|}
    \hline
    \multicolumn{8}{|c|}{Mass Variation}&
    \multicolumn{1}{|c|}{Grid Size} \\
    \hline
    TRANS1 $k = 1$& TRANS2 $k = 1$& TRANS3 $k = 1$& LINEAR $k = 1$& TRANS1 $k = 2$& TRANS2 $k = 2$& LINEAR $k = 2$ & QUADRATIC& $h$\\
    \hline
    2.3790E-02	&3.9848E-02	&7.4641E-01	&3.3734E+00	&2.0470E-02	&5.3024E+00	&5.1074E+00	&7.7767E-01	&3.5355\\
    \hline
    4.4905E-03	&2.4907E-03	&4.2112E-02	&1.4128E+00	&5.5934E-03	&1.9573E-01	&1.2811E-01	&1.5673E-01	&1.7678\\
    \hline
    3.9149E-04	&3.4686E-04	&5.6514E-02	&3.8382E-02	&1.5639E-04	&1.2164E-03	&8.7809E-03	&1.8765E-04	&0.8839\\
    \hline
    1.7353E-04	&1.7059E-05	&4.1335E-03	&1.1711E-02	&1.2299E-06	&3.7658E-05	&2.1934E-04	&4.9561E-05	&0.4419\\
    \hline
    6.3893E-06	&3.7086E-07	&4.1256E-04	&1.5655E-03	&2.7372E-08	&1.2963E-06	&5.6613E-04	&2.4421E-05	&0.2210\\
    \hline
    3.3483E-06	&1.7384E-07	&2.0474E-04	&3.8443E-05	&3.3725E-10	&6.3448E-08	&3.3924E-05	&8.6629E-07	&0.1105\\
    \hline
    4.3819E-08	&1.0804E-08	&2.3813E-04	&2.4234E-05	&3.2649E-09	&8.1159E-10	&2.5202E-06	&2.7770E-08	&0.0552\\
    \hline
    \end{tabular}
    \caption{Mass conservation errors for different solution-transfer methods, for the function $u_1$, and the polynomial orders $k=1$ and $k=2$.}
    \label{tab:mass_study}
\end{table}
\end{landscape}

\pagebreak
\clearpage
{\footnotesize\bibliography{technical-refs}}

\end{document}